\documentclass[12pt]{article}
\usepackage{amssymb}
\usepackage{amsfonts}
\usepackage{german}
%
%

\long\def\ig#1{\relax}
\ig{Thanks to Roberto Minio for this def'n.  Compare the def'n of
\comment in AMSTeX.}

\newcount \coefa
\newcount \coefb
\newcount \coefc
\newcount\tempcounta
\newcount\tempcountb
\newcount\tempcountc
\newcount\tempcountd
\newcount\xext
\newcount\yext
\newcount\xoff
\newcount\yoff
\newcount\gap%
\newcount\arrowtypea
\newcount\arrowtypeb
\newcount\arrowtypec
\newcount\arrowtyped
\newcount\arrowtypee
\newcount\height
\newcount\width
\newcount\xpos
\newcount\ypos
\newcount\run
\newcount\rise
\newcount\arrowlength
\newcount\halflength
\newcount\arrowtype
\newdimen\tempdimen
\newdimen\xlen
\newdimen\ylen
\newsavebox{\tempboxa}%
\newsavebox{\tempboxb}%
\newsavebox{\tempboxc}%

\makeatletter
\setlength{\unitlength}{.01em}%
\def\settypes(#1,#2,#3){\arrowtypea#1 \arrowtypeb#2 \arrowtypec#3}
\def\settoheight#1#2{\setbox\@tempboxa\hbox{#2}#1\ht\@tempboxa\relax}%
\def\settodepth#1#2{\setbox\@tempboxa\hbox{#2}#1\dp\@tempboxa\relax}%
\def\settokens[#1`#2`#3`#4]{%
     \def\tokena{#1}\def\tokenb{#2}\def\tokenc{#3}\def\tokend{#4}}
\def\setsqparms[#1`#2`#3`#4;#5`#6]{%
\arrowtypea #1
\arrowtypeb #2
\arrowtypec #3
\arrowtyped #4
\width #5
\height #6
}
\def\setpos(#1,#2){\xpos=#1 \ypos#2}

\def\bfig{\begin{picture}(\xext,\yext)(\xoff,\yoff)}
\def\efig{\end{picture}}

\def\putbox(#1,#2)#3{\put(#1,#2){\makebox(0,0){$#3$}}}

\def\settriparms[#1`#2`#3;#4]{\settripairparms[#1`#2`#3`1`1;#4]}%

\def\settripairparms[#1`#2`#3`#4`#5;#6]{%
\arrowtypea #1
\arrowtypeb #2
\arrowtypec #3
\arrowtyped #4
\arrowtypee #5
\width #6
\height #6
}

\def\resetparms{\settripairparms[1`1`1`1`1;500]\width 500}

\resetparms

\def\mvector(#1,#2)#3{
\put(0,0){\vector(#1,#2){#3}}%
\put(0,0){\vector(#1,#2){30}}%
}
\def\evector(#1,#2)#3{{
\arrowlength #3
\put(0,0){\vector(#1,#2){\arrowlength}}%
\advance \arrowlength by-30
\put(0,0){\vector(#1,#2){\arrowlength}}%
}}

\def\horsize#1#2{%
\settowidth{\tempdimen}{$#2$}%
#1=\tempdimen
\divide #1 by\unitlength
}

\def\vertsize#1#2{%
\settoheight{\tempdimen}{$#2$}%
#1=\tempdimen
\settodepth{\tempdimen}{$#2$}%
\advance #1 by\tempdimen
\divide #1 by\unitlength
}

\def\vertadjust[#1`#2`#3]{%
\vertsize{\tempcounta}{#1}%
\vertsize{\tempcountb}{#2}%
\ifnum \tempcounta<\tempcountb \tempcounta=\tempcountb \fi
\divide\tempcounta by2
\vertsize{\tempcountb}{#3}%
\ifnum \tempcountb>0 \advance \tempcountb by20 \fi
\ifnum \tempcounta<\tempcountb \tempcounta=\tempcountb \fi
}

\def\horadjust[#1`#2`#3]{%
\horsize{\tempcounta}{#1}%
\horsize{\tempcountb}{#2}%
\ifnum \tempcounta<\tempcountb \tempcounta=\tempcountb \fi
\divide\tempcounta by20
\horsize{\tempcountb}{#3}%
\ifnum \tempcountb>0 \advance \tempcountb by60 \fi
\ifnum \tempcounta<\tempcountb \tempcounta=\tempcountb \fi
}

\ig{ In this procedure, #1 is the paramater that sticks out all the way,
#2 sticks out the least and #3 is a label sticking out half way.  #4 is
the amount of the offset.}

\def\sladjust[#1`#2`#3]#4{%
\tempcountc=#4
\horsize{\tempcounta}{#1}%
\divide \tempcounta by2
\horsize{\tempcountb}{#2}%
\divide \tempcountb by2
\advance \tempcountb by-\tempcountc
\ifnum \tempcounta<\tempcountb \tempcounta=\tempcountb\fi
\divide \tempcountc by2
\horsize{\tempcountb}{#3}%
\advance \tempcountb by-\tempcountc
\ifnum \tempcountb>0 \advance \tempcountb by80\fi
\ifnum \tempcounta<\tempcountb \tempcounta=\tempcountb\fi
\advance\tempcounta by20
}

\def\putvector(#1,#2)(#3,#4)#5#6{{%
\xpos=#1
\ypos=#2
\run=#3
\rise=#4
\arrowlength=#5
\arrowtype=#6
\ifnum \arrowtype<0
    \ifnum \run=0
        \advance \ypos by-\arrowlength
    \else
        \tempcounta \arrowlength
        \multiply \tempcounta by\rise
        \divide \tempcounta by\run
        \ifnum\run>0
            \advance \xpos by\arrowlength
            \advance \ypos by\tempcounta
        \else
            \advance \xpos by-\arrowlength
            \advance \ypos by-\tempcounta
        \fi
    \fi
    \multiply \arrowtype by-1
    \multiply \rise by-1
    \multiply \run by-1
\fi
\ifnum \arrowtype=1
    \put(\xpos,\ypos){\vector(\run,\rise){\arrowlength}}%
\else\ifnum \arrowtype=2
    \put(\xpos,\ypos){\mvector(\run,\rise)\arrowlength}%
\else\ifnum\arrowtype=3
    \put(\xpos,\ypos){\evector(\run,\rise){\arrowlength}}%
\fi\fi\fi
}}

\def\putsplitvector(#1,#2)#3#4{
\xpos #1
\ypos #2
\arrowtype #4
\halflength #3
\arrowlength #3
\gap 140
\advance \halflength by-\gap
\divide \halflength by2
\ifnum \arrowtype=1
    \put(\xpos,\ypos){\line(0,-1){\halflength}}%
    \advance\ypos by-\halflength
    \advance\ypos by-\gap
    \put(\xpos,\ypos){\vector(0,-1){\halflength}}%
\else\ifnum \arrowtype=2
    \put(\xpos,\ypos){\line(0,-1)\halflength}%
    \put(\xpos,\ypos){\vector(0,-1)3}%
    \advance\ypos by-\halflength
    \advance\ypos by-\gap
    \put(\xpos,\ypos){\vector(0,-1){\halflength}}%
\else\ifnum\arrowtype=3
    \put(\xpos,\ypos){\line(0,-1)\halflength}%
    \advance\ypos by-\halflength
    \advance\ypos by-\gap
    \put(\xpos,\ypos){\evector(0,-1){\halflength}}%
\else\ifnum \arrowtype=-1
    \advance \ypos by-\arrowlength
    \put(\xpos,\ypos){\line(0,1){\halflength}}%
    \advance\ypos by\halflength
    \advance\ypos by\gap
    \put(\xpos,\ypos){\vector(0,1){\halflength}}%
\else\ifnum \arrowtype=-2
    \advance \ypos by-\arrowlength
    \put(\xpos,\ypos){\line(0,1)\halflength}%
    \put(\xpos,\ypos){\vector(0,1)3}%
    \advance\ypos by\halflength
    \advance\ypos by\gap
    \put(\xpos,\ypos){\vector(0,1){\halflength}}%
\else\ifnum\arrowtype=-3
    \advance \ypos by-\arrowlength
    \put(\xpos,\ypos){\line(0,1)\halflength}%
    \advance\ypos by\halflength
    \advance\ypos by\gap
    \put(\xpos,\ypos){\evector(0,1){\halflength}}%
\fi\fi\fi\fi\fi\fi
}

\def\putmorphism(#1)(#2,#3)[#4`#5`#6]#7#8#9{{%
\run #2
\rise #3
\ifnum\rise=0
  \puthmorphism(#1)[#4`#5`#6]{#7}{#8}{#9}%
\else\ifnum\run=0
  \putvmorphism(#1)[#4`#5`#6]{#7}{#8}{#9}%
\else
\setpos(#1)%
\arrowlength #7
\arrowtype #8
\ifnum\run=0
\else\ifnum\rise=0
\else
\ifnum\run>0
    \coefa=1
\else
   \coefa=-1
\fi
\ifnum\arrowtype>0
   \coefb=0
   \coefc=-1
\else
   \coefb=\coefa
   \coefc=1
   \arrowtype=-\arrowtype
\fi
\width=2
\multiply \width by\run
\divide \width by\rise
\ifnum \width<0  \width=-\width\fi
\advance\width by60
\if l#9 \width=-\width\fi
\putbox(\xpos,\ypos){#4}
{\multiply \coefa by\arrowlength
\advance\xpos by\coefa
\multiply \coefa by\rise
\divide \coefa by\run
\advance \ypos by\coefa
\putbox(\xpos,\ypos){#5} }%
{\multiply \coefa by\arrowlength
\divide \coefa by2
\advance \xpos by\coefa
\advance \xpos by\width
\multiply \coefa by\rise
\divide \coefa by\run
\advance \ypos by\coefa
\if l#9%
   \put(\xpos,\ypos){\makebox(0,0)[r]{$#6$}}%
\else\if r#9%
   \put(\xpos,\ypos){\makebox(0,0)[l]{$#6$}}%
\fi\fi }%
{\multiply \rise by-\coefc
\multiply \run by-\coefc
\multiply \coefb by\arrowlength
\advance \xpos by\coefb
\multiply \coefb by\rise
\divide \coefb by\run
\advance \ypos by\coefb
\multiply \coefc by70
\advance \ypos by\coefc
\multiply \coefc by\run
\divide \coefc by\rise
\advance \xpos by\coefc
\multiply \coefa by140
\multiply \coefa by\run
\divide \coefa by\rise
\advance \arrowlength by\coefa
\ifnum \arrowtype=1
   \put(\xpos,\ypos){\vector(\run,\rise){\arrowlength}}%
\else\ifnum\arrowtype=2
   \put(\xpos,\ypos){\mvector(\run,\rise){\arrowlength}}%
\else\ifnum\arrowtype=3
   \put(\xpos,\ypos){\evector(\run,\rise){\arrowlength}}%
\fi\fi\fi}\fi\fi\fi\fi}}

\def\puthmorphism(#1,#2)[#3`#4`#5]#6#7#8{{%
\xpos #1
\ypos #2
\width #6
\arrowlength #6
\putbox(\xpos,\ypos){#3\vphantom{#4}}%
{\advance \xpos by\arrowlength
\putbox(\xpos,\ypos){\vphantom{#3}#4}}%
\horsize{\tempcounta}{#3}%
\horsize{\tempcountb}{#4}%
\divide \tempcounta by2
\divide \tempcountb by2
\advance \tempcounta by30
\advance \tempcountb by30
\advance \xpos by\tempcounta
\advance \arrowlength by-\tempcounta
\advance \arrowlength by-\tempcountb
\putvector(\xpos,\ypos)(1,0){\arrowlength}{#7}%
\divide \arrowlength by2
\advance \xpos by\arrowlength
\vertsize{\tempcounta}{#5}%
\divide\tempcounta by2
\advance \tempcounta by20
\if a#8 %
   \advance \ypos by\tempcounta
   \putbox(\xpos,\ypos){#5}%
\else
   \advance \ypos by-\tempcounta
   \putbox(\xpos,\ypos){#5}%
\fi}}

\def\putvmorphism(#1,#2)[#3`#4`#5]#6#7#8{{%
\xpos #1
\ypos #2
\arrowlength #6
\arrowtype #7
\settowidth{\xlen}{$#5$}%
\putbox(\xpos,\ypos){#3}%
{\advance \ypos by-\arrowlength
\putbox(\xpos,\ypos){#4}}%
{\advance\arrowlength by-140
\advance \ypos by-70
\ifdim\xlen>0pt
   \if m#8%
      \putsplitvector(\xpos,\ypos){\arrowlength}{\arrowtype}%
   \else
      \putvector(\xpos,\ypos)(0,-1){\arrowlength}{\arrowtype}%
   \fi
\else
   \putvector(\xpos,\ypos)(0,-1){\arrowlength}{\arrowtype}%
\fi}%
\ifdim\xlen>0pt
   \divide \arrowlength by2
   \advance\ypos by-\arrowlength
   \if l#8%
      \advance \xpos by-40
      \put(\xpos,\ypos){\makebox(0,0)[r]{$#5$}}%
   \else\if r#8%
      \advance \xpos by40
      \put(\xpos,\ypos){\makebox(0,0)[l]{$#5$}}%
   \else
      \putbox(\xpos,\ypos){#5}%
   \fi\fi
\fi
}}

\def\topadjust[#1`#2`#3]{%
\yoff=10
\vertadjust[#1`#2`{#3}]%
\advance \yext by\tempcounta
\advance \yext by 10
}
\def\botadjust[#1`#2`#3]{%
\vertadjust[#1`#2`{#3}]%
\advance \yext by\tempcounta
\advance \yoff by-\tempcounta
}
\def\leftadjust[#1`#2`#3]{%
\xoff=0
\horadjust[#1`#2`{#3}]%
\advance \xext by\tempcounta
\advance \xoff by-\tempcounta
}
\def\rightadjust[#1`#2`#3]{%
\horadjust[#1`#2`{#3}]%
\advance \xext by\tempcounta
}
\def\rightsladjust[#1`#2`#3]{%
\sladjust[#1`#2`{#3}]{\width}%
\advance \xext by\tempcounta
}
\def\leftsladjust[#1`#2`#3]{%
\xoff=0
\sladjust[#1`#2`{#3}]{\width}%
\advance \xext by\tempcounta
\advance \xoff by-\tempcounta
}
\def\adjust[#1`#2;#3`#4;#5`#6;#7`#8]{%
\topadjust[#1``{#2}]
\leftadjust[#3``{#4}]
\rightadjust[#5``{#6}]
\botadjust[#7``{#8}]}

\def\putsquarep<#1>(#2)[#3;#4`#5`#6`#7]{{%
\setsqparms[#1]%
\setpos(#2)%
\settokens[#3]%
\puthmorphism(\xpos,\ypos)[\tokenc`\tokend`{#7}]{\width}{\arrowtyped}b%
\advance\ypos by \height
\puthmorphism(\xpos,\ypos)[\tokena`\tokenb`{#4}]{\width}{\arrowtypea}a%
\putvmorphism(\xpos,\ypos)[``{#5}]{\height}{\arrowtypeb}l%
\advance\xpos by \width
\putvmorphism(\xpos,\ypos)[``{#6}]{\height}{\arrowtypec}r%
}}

\def\putsquare{\@ifnextchar <{\putsquarep}{\putsquarep%
   <\arrowtypea`\arrowtypeb`\arrowtypec`\arrowtyped;\width`\height>}}
\def\square{\@ifnextchar< {\squarep}{\squarep
   <\arrowtypea`\arrowtypeb`\arrowtypec`\arrowtyped;\width`\height>}}
\def\squarep<#1>[#2`#3`#4`#5;#6`#7`#8`#9]{{
\setsqparms[#1]
\xext=\width                                          
\yext=\height                                         
\topadjust[#2`#3`{#6}]
\botadjust[#4`#5`{#9}]
\leftadjust[#2`#4`{#7}]
\rightadjust[#3`#5`{#8}]
\begin{picture}(\xext,\yext)(\xoff,\yoff)
\putsquarep<\arrowtypea`\arrowtypeb`\arrowtypec`\arrowtyped;\width`\height>%
(0,0)[#2`#3`#4`#5;#6`#7`#8`{#9}]%
\end{picture}%
}}

\def\putptrianglep<#1>(#2,#3)[#4`#5`#6;#7`#8`#9]{{%
\settriparms[#1]%
\xpos=#2 \ypos=#3
\advance\ypos by \height
\puthmorphism(\xpos,\ypos)[#4`#5`{#7}]{\height}{\arrowtypea}a%
\putvmorphism(\xpos,\ypos)[`#6`{#8}]{\height}{\arrowtypeb}l%
\advance\xpos by\height
\putmorphism(\xpos,\ypos)(-1,-1)[``{#9}]{\height}{\arrowtypec}r%
}}

\def\putptriangle{\@ifnextchar <{\putptrianglep}{\putptrianglep
   <\arrowtypea`\arrowtypeb`\arrowtypec;\height>}}
\def\ptriangle{\@ifnextchar <{\ptrianglep}{\ptrianglep
   <\arrowtypea`\arrowtypeb`\arrowtypec;\height>}}

\def\ptrianglep<#1>[#2`#3`#4;#5`#6`#7]{{
\settriparms[#1]%
\width=\height                         
\xext=\width                           
\yext=\width                           
\topadjust[#2`#3`{#5}]
\botadjust[#3``]
\leftadjust[#2`#4`{#6}]
\rightsladjust[#3`#4`{#7}]
\begin{picture}(\xext,\yext)(\xoff,\yoff)
\putptrianglep<\arrowtypea`\arrowtypeb`\arrowtypec;\height>%
(0,0)[#2`#3`#4;#5`#6`{#7}]%
\end{picture}%
}}

\def\putqtrianglep<#1>(#2,#3)[#4`#5`#6;#7`#8`#9]{{%
\settriparms[#1]%
\xpos=#2 \ypos=#3
\advance\ypos by\height
\puthmorphism(\xpos,\ypos)[#4`#5`{#7}]{\height}{\arrowtypea}a%
\putmorphism(\xpos,\ypos)(1,-1)[``{#8}]{\height}{\arrowtypeb}l%
\advance\xpos by\height
\putvmorphism(\xpos,\ypos)[`#6`{#9}]{\height}{\arrowtypec}r%
}}

\def\putqtriangle{\@ifnextchar <{\putqtrianglep}{\putqtrianglep
   <\arrowtypea`\arrowtypeb`\arrowtypec;\height>}}
\def\qtriangle{\@ifnextchar <{\qtrianglep}{\qtrianglep
   <\arrowtypea`\arrowtypeb`\arrowtypec;\height>}}

\def\qtrianglep<#1>[#2`#3`#4;#5`#6`#7]{{
\settriparms[#1]
\width=\height                         
\xext=\width                           
\yext=\height                          
\topadjust[#2`#3`{#5}]
\botadjust[#4``]
\leftsladjust[#2`#4`{#6}]
\rightadjust[#3`#4`{#7}]
\begin{picture}(\xext,\yext)(\xoff,\yoff)
\putqtrianglep<\arrowtypea`\arrowtypeb`\arrowtypec;\height>%
(0,0)[#2`#3`#4;#5`#6`{#7}]%
\end{picture}%
}}

\def\putdtrianglep<#1>(#2,#3)[#4`#5`#6;#7`#8`#9]{{%
\settriparms[#1]%
\xpos=#2 \ypos=#3
\puthmorphism(\xpos,\ypos)[#5`#6`{#9}]{\height}{\arrowtypec}b%
\advance\xpos by \height \advance\ypos by\height
\putmorphism(\xpos,\ypos)(-1,-1)[``{#7}]{\height}{\arrowtypea}l%
\putvmorphism(\xpos,\ypos)[#4``{#8}]{\height}{\arrowtypeb}r%
}}

\def\putdtriangle{\@ifnextchar <{\putdtrianglep}{\putdtrianglep
   <\arrowtypea`\arrowtypeb`\arrowtypec;\height>}}
\def\dtriangle{\@ifnextchar <{\dtrianglep}{\dtrianglep
   <\arrowtypea`\arrowtypeb`\arrowtypec;\height>}}

\def\dtrianglep<#1>[#2`#3`#4;#5`#6`#7]{{
\settriparms[#1]
\width=\height                         
\xext=\width                           
\yext=\height                          
\topadjust[#2``]
\botadjust[#3`#4`{#7}]
\leftsladjust[#3`#2`{#5}]
\rightadjust[#2`#4`{#6}]
\begin{picture}(\xext,\yext)(\xoff,\yoff)
\putdtrianglep<\arrowtypea`\arrowtypeb`\arrowtypec;\height>%
(0,0)[#2`#3`#4;#5`#6`{#7}]%
\end{picture}%
}}

\def\putbtrianglep<#1>(#2,#3)[#4`#5`#6;#7`#8`#9]{{%
\settriparms[#1]%
\xpos=#2 \ypos=#3
\puthmorphism(\xpos,\ypos)[#5`#6`{#9}]{\height}{\arrowtypec}b%
\advance\ypos by\height
\putmorphism(\xpos,\ypos)(1,-1)[``{#8}]{\height}{\arrowtypeb}r%
\putvmorphism(\xpos,\ypos)[#4``{#7}]{\height}{\arrowtypea}l%
}}

\def\putbtriangle{\@ifnextchar <{\putbtrianglep}{\putbtrianglep
   <\arrowtypea`\arrowtypeb`\arrowtypec;\height>}}
\def\btriangle{\@ifnextchar <{\btrianglep}{\btrianglep
   <\arrowtypea`\arrowtypeb`\arrowtypec;\height>}}

\def\btrianglep<#1>[#2`#3`#4;#5`#6`#7]{{
\settriparms[#1]
\width=\height                         
\xext=\width                           
\yext=\height                          
\topadjust[#2``]
\botadjust[#3`#4`{#7}]
\leftadjust[#2`#3`{#5}]
\rightsladjust[#4`#2`{#6}]
\begin{picture}(\xext,\yext)(\xoff,\yoff)
\putbtrianglep<\arrowtypea`\arrowtypeb`\arrowtypec;\height>%
(0,0)[#2`#3`#4;#5`#6`{#7}]%
\end{picture}%
}}

\def\putAtrianglep<#1>(#2,#3)[#4`#5`#6;#7`#8`#9]{{%
\settriparms[#1]%
\xpos=#2 \ypos=#3
{\multiply \height by2
\puthmorphism(\xpos,\ypos)[#5`#6`{#9}]{\height}{\arrowtypec}b}%
\advance\xpos by\height \advance\ypos by\height
\putmorphism(\xpos,\ypos)(-1,-1)[#4``{#7}]{\height}{\arrowtypea}l%
\putmorphism(\xpos,\ypos)(1,-1)[``{#8}]{\height}{\arrowtypeb}r%
}}

\def\putAtriangle{\@ifnextchar <{\putAtrianglep}{\putAtrianglep
   <\arrowtypea`\arrowtypeb`\arrowtypec;\height>}}
\def\Atriangle{\@ifnextchar <{\Atrianglep}{\Atrianglep
   <\arrowtypea`\arrowtypeb`\arrowtypec;\height>}}

\def\Atrianglep<#1>[#2`#3`#4;#5`#6`#7]{{
\settriparms[#1]
\width=\height                         
\xext=\width                           
\yext=\height                          
\topadjust[#2``]
\botadjust[#3`#4`{#7}]
\multiply \xext by2 
\leftsladjust[#3`#2`{#5}]
\rightsladjust[#4`#2`{#6}]
\begin{picture}(\xext,\yext)(\xoff,\yoff)%
\putAtrianglep<\arrowtypea`\arrowtypeb`\arrowtypec;\height>%
(0,0)[#2`#3`#4;#5`#6`{#7}]%
\end{picture}%
}}

\def\putAtrianglepairp<#1>(#2)[#3;#4`#5`#6`#7`#8]{{
\settripairparms[#1]%
\setpos(#2)%
\settokens[#3]%
\puthmorphism(\xpos,\ypos)[\tokenb`\tokenc`{#7}]{\height}{\arrowtyped}b%
\advance\xpos by\height
\advance\ypos by\height
\putmorphism(\xpos,\ypos)(-1,-1)[\tokena``{#4}]{\height}{\arrowtypea}l%
\putvmorphism(\xpos,\ypos)[``{#5}]{\height}{\arrowtypeb}m%
\putmorphism(\xpos,\ypos)(1,-1)[``{#6}]{\height}{\arrowtypec}r%
}}

\def\putAtrianglepair{\@ifnextchar <{\putAtrianglepairp}{\putAtrianglepairp%
   <\arrowtypea`\arrowtypeb`\arrowtypec`\arrowtyped`\arrowtypee;\height>}}
\def\Atrianglepair{\@ifnextchar <{\Atrianglepairp}{\Atrianglepairp%
   <\arrowtypea`\arrowtypeb`\arrowtypec`\arrowtyped`\arrowtypee;\height>}}

\def\Atrianglepairp<#1>[#2;#3`#4`#5`#6`#7]{{%
\settripairparms[#1]%
\settokens[#2]%
\width=\height
\xext=\width
\yext=\height
\topadjust[\tokena``]%
\vertadjust[\tokenb`\tokenc`{#6}]
\tempcountd=\tempcounta                       
\vertadjust[\tokenc`\tokend`{#7}]
\ifnum\tempcounta<\tempcountd                 
\tempcounta=\tempcountd\fi                    
\advance \yext by\tempcounta                  
\advance \yoff by-\tempcounta                 %
\multiply \xext by2 
\leftsladjust[\tokenb`\tokena`{#3}]
\rightsladjust[\tokend`\tokena`{#5}]%
\begin{picture}(\xext,\yext)(\xoff,\yoff)%
\putAtrianglepairp
<\arrowtypea`\arrowtypeb`\arrowtypec`\arrowtyped`\arrowtypee;\height>%
(0,0)[#2;#3`#4`#5`#6`{#7}]%
\end{picture}%
}}

\def\putVtrianglep<#1>(#2,#3)[#4`#5`#6;#7`#8`#9]{{%
\settriparms[#1]%
\xpos=#2 \ypos=#3
\advance\ypos by\height
{\multiply\height by2
\puthmorphism(\xpos,\ypos)[#4`#5`{#7}]{\height}{\arrowtypea}a}%
\putmorphism(\xpos,\ypos)(1,-1)[`#6`{#8}]{\height}{\arrowtypeb}l%
\advance\xpos by\height
\advance\xpos by\height
\putmorphism(\xpos,\ypos)(-1,-1)[``{#9}]{\height}{\arrowtypec}r%
}}

\def\putVtriangle{\@ifnextchar <{\putVtrianglep}{\putVtrianglep
   <\arrowtypea`\arrowtypeb`\arrowtypec;\height>}}
\def\Vtriangle{\@ifnextchar <{\Vtrianglep}{\Vtrianglep
   <\arrowtypea`\arrowtypeb`\arrowtypec;\height>}}

\def\Vtrianglep<#1>[#2`#3`#4;#5`#6`#7]{{
\settriparms[#1]
\width=\height                         
\xext=\width                           
\yext=\height                          
\topadjust[#2`#3`{#5}]
\botadjust[#4``]
\multiply \xext by2 
\leftsladjust[#2`#3`{#6}]
\rightsladjust[#3`#4`{#7}]
\begin{picture}(\xext,\yext)(\xoff,\yoff)%
\putVtrianglep<\arrowtypea`\arrowtypeb`\arrowtypec;\height>%
(0,0)[#2`#3`#4;#5`#6`{#7}]%
\end{picture}%
}}

\def\putVtrianglepairp<#1>(#2)[#3;#4`#5`#6`#7`#8]{{
\settripairparms[#1]%
\setpos(#2)%
\settokens[#3]%
\advance\ypos by\height
\putmorphism(\xpos,\ypos)(1,-1)[`\tokend`{#6}]{\height}{\arrowtypec}l%
\puthmorphism(\xpos,\ypos)[\tokena`\tokenb`{#4}]{\height}{\arrowtypea}a%
\advance\xpos by\height
\putvmorphism(\xpos,\ypos)[``{#7}]{\height}{\arrowtyped}m%
\advance\xpos by\height
\putmorphism(\xpos,\ypos)(-1,-1)[``{#8}]{\height}{\arrowtypee}r%
}}

\def\putVtrianglepair{\@ifnextchar <{\putVtrianglepairp}{\putVtrianglepairp%
    <\arrowtypea`\arrowtypeb`\arrowtypec`\arrowtyped`\arrowtypee;\height>}}
\def\Vtrianglepair{\@ifnextchar <{\Vtrianglepairp}{\Vtrianglepairp%
    <\arrowtypea`\arrowtypeb`\arrowtypec`\arrowtyped`\arrowtypee;\height>}}

\def\Vtrianglepairp<#1>[#2;#3`#4`#5`#6`#7]{{%
\settripairparms[#1]%
\settokens[#2]
\xext=\height                  
\width=\height                 
\yext=\height                  
\vertadjust[\tokena`\tokenb`{#4}]
\tempcountd=\tempcounta        
\vertadjust[\tokenb`\tokenc`{#5}]
\ifnum\tempcounta<\tempcountd%
\tempcounta=\tempcountd\fi
\advance \yext by\tempcounta
\botadjust[\tokend``]%
\multiply \xext by2
\leftsladjust[\tokena`\tokend`{#6}]%
\rightsladjust[\tokenc`\tokend`{#7}]%
\begin{picture}(\xext,\yext)(\xoff,\yoff)%
\putVtrianglepairp
<\arrowtypea`\arrowtypeb`\arrowtypec`\arrowtyped`\arrowtypee;\height>%
(0,0)[#2;#3`#4`#5`#6`{#7}]%
\end{picture}%
}}

\def\putCtrianglep<#1>(#2,#3)[#4`#5`#6;#7`#8`#9]{{%
\settriparms[#1]%
\xpos=#2 \ypos=#3
\advance\ypos by\height
\putmorphism(\xpos,\ypos)(1,-1)[``{#9}]{\height}{\arrowtypec}l%
\advance\xpos by\height
\advance\ypos by\height
\putmorphism(\xpos,\ypos)(-1,-1)[#4`#5`{#7}]{\height}{\arrowtypea}l%
{\multiply\height by 2
\putvmorphism(\xpos,\ypos)[`#6`{#8}]{\height}{\arrowtypeb}r}%
}}

\def\putCtriangle{\@ifnextchar <{\putCtrianglep}{\putCtrianglep
    <\arrowtypea`\arrowtypeb`\arrowtypec;\height>}}
\def\Ctriangle{\@ifnextchar <{\Ctrianglep}{\Ctrianglep
    <\arrowtypea`\arrowtypeb`\arrowtypec;\height>}}

\def\Ctrianglep<#1>[#2`#3`#4;#5`#6`#7]{{
\settriparms[#1]
\width=\height                          
\xext=\width                            
\yext=\height                           
\multiply \yext by2 
\topadjust[#2``]
\botadjust[#4``]
\sladjust[#3`#2`{#5}]{\width}
\tempcountd=\tempcounta                 
\sladjust[#3`#4`{#7}]{\width}
\ifnum \tempcounta<\tempcountd          
\tempcounta=\tempcountd\fi              
\advance \xext by\tempcounta            
\advance \xoff by-\tempcounta           %
\rightadjust[#2`#4`{#6}]
\begin{picture}(\xext,\yext)(\xoff,\yoff)%
\putCtrianglep<\arrowtypea`\arrowtypeb`\arrowtypec;\height>%
(0,0)[#2`#3`#4;#5`#6`{#7}]%
\end{picture}%
}}

\def\putDtrianglep<#1>(#2,#3)[#4`#5`#6;#7`#8`#9]{{%
\settriparms[#1]%
\xpos=#2 \ypos=#3
\advance\xpos by\height \advance\ypos by\height
\putmorphism(\xpos,\ypos)(-1,-1)[``{#9}]{\height}{\arrowtypec}r%
\advance\xpos by-\height \advance\ypos by\height
\putmorphism(\xpos,\ypos)(1,-1)[`#5`{#8}]{\height}{\arrowtypeb}r%
{\multiply\height by 2
\putvmorphism(\xpos,\ypos)[#4`#6`{#7}]{\height}{\arrowtypea}l}%
}}

\def\putDtriangle{\@ifnextchar <{\putDtrianglep}{\putDtrianglep
    <\arrowtypea`\arrowtypeb`\arrowtypec;\height>}}
\def\Dtriangle{\@ifnextchar <{\Dtrianglep}{\Dtrianglep
   <\arrowtypea`\arrowtypeb`\arrowtypec;\height>}}

\def\Dtrianglep<#1>[#2`#3`#4;#5`#6`#7]{{
\settriparms[#1]
\width=\height                         
\xext=\height                          
\yext=\height                          
\multiply \yext by2 
\topadjust[#2``]
\botadjust[#4``]
\leftadjust[#2`#4`{#5}]
\sladjust[#3`#2`{#5}]{\height}
\tempcountd=\tempcountd                
\sladjust[#3`#4`{#7}]{\height}
\ifnum \tempcounta<\tempcountd         
\tempcounta=\tempcountd\fi             
\advance \xext by\tempcounta           %
\begin{picture}(\xext,\yext)(\xoff,\yoff)
\putDtrianglep<\arrowtypea`\arrowtypeb`\arrowtypec;\height>%
(0,0)[#2`#3`#4;#5`#6`{#7}]%
\end{picture}%
}}

\def\setrecparms[#1`#2]{\width=#1 \height=#2}%
%

\def\recursep<#1`#2>[#3;#4`#5`#6`#7`#8]{{%
\width=#1 \height=#2
\settokens[#3]
\settowidth{\tempdimen}{$\tokena$}
\ifdim\tempdimen=0pt
  \savebox{\tempboxa}{\hbox{$\tokenb$}}%
  \savebox{\tempboxb}{\hbox{$\tokend$}}%
  \savebox{\tempboxc}{\hbox{$#6$}}%
\else
  \savebox{\tempboxa}{\hbox{$\hbox{$\tokena$}\times\hbox{$\tokenb$}$}}%
  \savebox{\tempboxb}{\hbox{$\hbox{$\tokena$}\times\hbox{$\tokend$}$}}%
  \savebox{\tempboxc}{\hbox{$\hbox{$\tokena$}\times\hbox{$#6$}$}}%
\fi
\ypos=\height
\divide\ypos by 2
\xpos=\ypos
\advance\xpos by \width
\xext=\xpos \yext=\height
\topadjust[#3`\usebox{\tempboxa}`{#4}]%
\botadjust[#5`\usebox{\tempboxb}`{#8}]%
\sladjust[\tokenc`\tokenb`{#5}]{\ypos}%
\tempcountd=\tempcounta
\sladjust[\tokenc`\tokend`{#5}]{\ypos}%
\ifnum \tempcounta<\tempcountd
\tempcounta=\tempcountd\fi
\advance \xext by\tempcounta
\advance \xoff by-\tempcounta
\rightadjust[\usebox{\tempboxa}`\usebox{\tempboxb}`\usebox{\tempboxc}]%
\bfig
\putCtrianglep<-1`1`1;\ypos>(0,0)[`\tokenc`;#5`#6`{#7}]%
\puthmorphism(\ypos,0)[\tokend`\usebox{\tempboxb}`{#8}]{\width}{-1}b%
\puthmorphism(\ypos,\height)[\tokenb`\usebox{\tempboxa}`{#4}]{\width}{-1}a%
\advance\ypos by \width
\putvmorphism(\ypos,\height)[``\usebox{\tempboxc}]{\height}1r%
\efig
}}

\def\recurse{\@ifnextchar <{\recursep}{\recursep<\width`\height>}}

\def\puttwohmorphisms(#1,#2)[#3`#4;#5`#6]#7#8#9{{%
%
\puthmorphism(#1,#2)[#3`#4`]{#7}0a
\ypos=#2
\advance\ypos by 20
\puthmorphism(#1,\ypos)[\phantom{#3}`\phantom{#4}`#5]{#7}{#8}a
\advance\ypos by -40
\puthmorphism(#1,\ypos)[\phantom{#3}`\phantom{#4}`#6]{#7}{#9}b
}}

\def\puttwovmorphisms(#1,#2)[#3`#4;#5`#6]#7#8#9{{%
%
%
%
\putvmorphism(#1,#2)[#3`#4`]{#7}0a
\xpos=#1
\advance\xpos by -20
\putvmorphism(\xpos,#2)[\phantom{#3}`\phantom{#4}`#5]{#7}{#8}l
\advance\xpos by 40
\putvmorphism(\xpos,#2)[\phantom{#3}`\phantom{#4}`#6]{#7}{#9}r
}}

\def\puthcoequalizer(#1)[#2`#3`#4;#5`#6`#7]#8#9{{%
%
\setpos(#1)%
\puttwohmorphisms(\xpos,\ypos)[#2`#3;#5`#6]{#8}11%
\advance\xpos by #8
\puthmorphism(\xpos,\ypos)[\phantom{#3}`#4`#7]{#8}1{#9}
}}

\def\putvcoequalizer(#1)[#2`#3`#4;#5`#6`#7]#8#9{{%
%
%
%
%
\setpos(#1)%
\puttwovmorphisms(\xpos,\ypos)[#2`#3;#5`#6]{#8}11%
\advance\ypos by -#8
\putvmorphism(\xpos,\ypos)[\phantom{#3}`#4`#7]{#8}1{#9}
}}

\def\putthreehmorphisms(#1)[#2`#3;#4`#5`#6]#7(#8)#9{{%
\setpos(#1) \settypes(#8)
\if a#9 %
     \vertsize{\tempcounta}{#5}%
     \vertsize{\tempcountb}{#6}%
     \ifnum \tempcounta<\tempcountb \tempcounta=\tempcountb \fi
\else
     \vertsize{\tempcounta}{#4}%
     \vertsize{\tempcountb}{#5}%
     \ifnum \tempcounta<\tempcountb \tempcounta=\tempcountb \fi
\fi
\advance \tempcounta by 60
\puthmorphism(\xpos,\ypos)[#2`#3`#5]{#7}{\arrowtypeb}{#9}
\advance\ypos by \tempcounta
\puthmorphism(\xpos,\ypos)[\phantom{#2}`\phantom{#3}`#4]{#7}{\arrowtypea}{#9}
\advance\ypos by -\tempcounta \advance\ypos by -\tempcounta
\puthmorphism(\xpos,\ypos)[\phantom{#2}`\phantom{#3}`#6]{#7}{\arrowtypec}{#9}
}}

\def\putarc(#1,#2)[#3`#4`#5]#6#7#8{{%
\xpos #1
\ypos #2
\width #6
\arrowlength #6
\putbox(\xpos,\ypos){#3\vphantom{#4}}%
{\advance \xpos by\arrowlength
\putbox(\xpos,\ypos){\vphantom{#3}#4}}%
\horsize{\tempcounta}{#3}%
\horsize{\tempcountb}{#4}%
\divide \tempcounta by2
\divide \tempcountb by2
\advance \tempcounta by30
\advance \tempcountb by30
\advance \xpos by\tempcounta
\advance \arrowlength by-\tempcounta
\advance \arrowlength by-\tempcountb
\halflength=\arrowlength \divide\halflength by 2
\divide\arrowlength by 5
\put(\xpos,\ypos){\bezier{\arrowlength}(0,0)(50,50)(\halflength,50)}
\ifnum #7=-1 \put(\xpos,\ypos){\vector(-3,-2)0} \fi
\advance\xpos by \halflength
\put(\xpos,\ypos){\xpos=\halflength \advance\xpos by -50
   \bezier{\arrowlength}(0,50)(\xpos,50)(\halflength,0)}
\ifnum #7=1 {\advance \xpos by
   \halflength \put(\xpos,\ypos){\vector(3,-2)0}} \fi
\advance\ypos by 50
\vertsize{\tempcounta}{#5}%
\divide\tempcounta by2
\advance \tempcounta by20
\if a#8 %
   \advance \ypos by\tempcounta
   \putbox(\xpos,\ypos){#5}%
\else
   \advance \ypos by-\tempcounta
   \putbox(\xpos,\ypos){#5}%
\fi
}}

\makeatother
\hoffset-1.4cm
\voffset-2cm
\parskip 2ex
\textheight24cm
\textwidth16cm

\newtheorem{theorem}{Satz}

\newtheorem{corollary}[theorem]{Folgerung}
\newtheorem{remark}[theorem]{Bemerkung}
\newtheorem{proposition}[theorem]{Proposition}

\newtheorem{lemma}[theorem]{Lemma}
\newtheorem{beispiel}[theorem]{Beispiel}

\newcommand{\supp}{\mbox{Supp }}

\newcommand{\ay}{{\mathfrak{a}}}
\newcommand{\py}{{\mathfrak{p}}}
\newcommand{\my}{{\mathfrak{m}}}

\newcommand{\by}{{\mathfrak{b}}}
\newcommand{\cy}{{\mathfrak{c}}}

\newenvironment{proof}{{\it Beweis}. $\;\;$}{\hspace*{\fill} $\Box$}

\begin{document}

\title{\Large\bf "Uber die assoziierten Primideale\\ der Vervollst"andigung }

\author{Helmut Z"oschinger\\ 
Mathematisches Institut der Universit"at M"unchen\\
Theresienstr. 39, D-80333 M"unchen, Germany\\
E-mail: zoeschinger@mathematik.uni-muenchen.de}

\date{}
\maketitle

\vspace{1cm}

\begin{center}  
{\bf Abstract}
\end{center}

Let $(R,\my)$ be a noetherian local ring and let $M$ be an $R$-module such that $\bigcap\limits_{n\geq 1} \my^n M=0.$ Let $\hat{M}$ be the completion of $M$. We show that Ass$(\hat{M})=$ Koatt$(M)$ holds in the following three cases:
if $\dim(R)\leq 1,$ if $\hat{M}$ as $R$-module is flat, or if $M$ is the direct sum of $R$-modules which are finitely generated.
If $M$ is pure in $\hat{M}$ then at least Ass$(\hat{M}) \subset $ Koatt$(M)$ holds. If the conjecture by A.-M.Simon on complete $R$-modules is valid then one has Koatt$(M)\subset $ Ass$(\hat{M}).$

\vspace{0.5cm}

{\noindent{\it Key Words:} Complete modules, pure-injective modules, pure-essential extensions, co\-attach\-ed primes, Mittag-Leffler modules.}

\vspace{0.5cm}

{\noindent{\it Mathematics Subject Classification (2000):} 13B35, 13C11, 13J10.}


\section*{Einleitung}

Sei $(R,\my)$ ein kommutativer noetherscher lokaler Ring, $M$ ein $R$-Modul mit $\bigcap\limits_{n\geq 1} \my^nM=0$ und $\hat{M}=\lim\limits_{\leftarrow} M/\my^nM$ die Vervollst"andigung von $M$ in der $\my$-adischen Topologie.
Wir interessieren uns in dieser Arbeit f"ur die Menge Ass$(\hat{M})$ aller zum $R$-Modul $\hat{M}$ assoziierten Primideale. Ist $M$ endlich erzeugt, gilt Ass$(\hat{M})=$ Ass$(M)$. Ist $M$ nicht endlich erzeugt, wei"s man "uber die Menge Ass$(\hat{M})$ sehr wenig.
Sie kann viel gr"o"ser als Ass$(M)$ sein, wie die Beispiele $M_1=\coprod\limits_{i=1}^\infty R/\my^i$ und $M_2=\coprod\limits_{i=1}^\infty (R/\my^i)^0$ im dritten Abschnitt zeigen (wobei $A^0= $ Hom$_R(A,E)$ das Matlis-Duale des $R$-Moduls $A$ sei):
Es ist Ass$(M_1)=$ Ass$(M_2)=\{\my\},$ w"ahrend nach (3.5) und (3.6) gilt
$$\mbox{Ass}(\hat{M}_1)\;=\;\{\my\}\,\cup\,\mbox{Ass}(R)\quad \mbox{und}\;\mbox{ Ass}(\hat{M}_2)\;=\;\mbox{Spec}(R).$$
Bei beliebigem $M$ scheint die Menge Koatt$(M)$ ein guter Kandidat f"ur Ass$(\hat{M})$ zu sein: $\py \in $ Spec$(R)$ hei"st {\it koattachiert} zu $M$, wenn es einen Untermodul $U$ von $M$ gibt mit $\py = $ Ann$_R(U).$
F"ur die rein-injektive H"ulle $N$ von $M$ zeigen wir nun  Koatt$(N) = $ Koatt$(M)$, und falls $M$ rein in $\hat{M}$ ist, entsprechend Koatt$(\hat{M})= $ Koatt$(M)$, woraus Ass$(\hat{M}) \subset $ Koatt$(M)$ folgt. 
Der Vergleich zwischen $\hat{M}$ und $N$ f"uhrt noch weiter, denn mit $N_1/M=P(N/M)$, dem gr"o"sten radikalvollen Untermodul von $N/M$, und $H(N_1)=\bigcap\limits_{n\geq 1} \my^n N_1$ gilt nach (1.6)
$$\hat{M} \cong\;N_1/H(N_1).$$
Daraus folgen die beiden Hauptergebnisse (1.7) und (1.8) des ersten Abschnittes:
Ist $\dim(R)\leq 1$ oder $\hat{M}$ als $R$-Modul flach, so gilt Ass$(\hat{M})=$ Koatt$(M)$.

Weil im allgemeinen $M$ nicht rein in $\hat{M}$ ist, untersuchen wir im zweiten Abschnitt sogenannte {\it totalseparierte} $R$-Moduln, bei denen nicht nur $M$, sondern alle $X\otimes_R M$, mit $X$ endlich erzeugt, separiert sind. Das ist
nach (2.1) genau dann der Fall, wenn die rein-injektive H"ulle von $M$ separiert ist. Zusammen mit dem Begriff des Mittag-Leffler-Moduls -- f"ur jede Familie
$Q_i(i \in I)$ von $R$-Moduln ist die kanonische Abbildung $M\otimes_R (\prod Q_i)\to \prod (M \otimes_R Q_i)\;$ injektiv -- erhalten wir die Implikationen
$$M\mbox{ ist Mittag-Leffler}\;\Longrightarrow\;M \mbox{ ist totalsepariert}\;\Longrightarrow \;M \mbox{ ist rein in } \hat{M},$$
und wann hier Umkehrungen gelten, wird in (2.4) bis (2.7) untersucht.
Anschlie"send geben wir neue Beispiele daf"ur, da"s $M$ nicht rein in $\hat{M}$ ist (2.8), ja sogar $\hat{M}$ flach und $M$ nicht flach ist (2.9).

Der dritte und letzte Abschnitt hat als Ziel, die Formel
$$\mbox{Ass }(\hat{M})\;=\;\mbox{Koatt }(M)$$
f"ur jeden $R$-Modul $M$ zu zeigen, der direkte Summe von endlich erzeugten (allgemeiner koatomaren) Moduln ist (3.4).
Ob diese Formel f"ur jeden separierten $R$-Modul $M$ gilt, ist ein ungel"ostes Problem und h"angt eng mit der von Simon 1990 gestellten Frage zusammen, ob $\supp (M)=\,V($Ann$_R(M))$ f"ur jeden {\it vollst"andigen} $R$-Modul $M$ gilt (3.7).

\section{"Uber die Inklusion Ass$(\hat{M})\subset $ Koatt$(M)$}

Die Menge Koatt$(M)$ hat gegen"uber den assoziierten Primidealen einige Vorteile: Ist $\py \in $ Spec$(R)$ Durchschnitt von beliebig vielen koattachierten Primidealen, ist auch $\py \in $ Koatt$(M)$, und ist $\py$ ein Primdivisor von Ann$_R(M)$, folgt schon $\py \in $ Koatt$(M)$. Falls $M$ vollst"andig ist, gilt noch mehr:

\begin{lemma}
Ist $M$ vollst"andig, so ist jedes $\py \in $ Koatt$(M)$ Durchschnitt von assoziierten Primidealen.
\end{lemma}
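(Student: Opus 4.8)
The plan is to prove the sharper assertion
$\py=\bigcap\{\qy\in\mbox{Ass}(M):\qy\supseteq\py\}$,
which directly exhibits $\py$ as an intersection of associated primes (the family is nonempty, since $U\neq0$ forces $\mbox{Ass}(U)\neq\emptyset$ and every associated prime of $U$ contains $\py$). Write $\py=\mbox{Ann}_R(U)$ with $U\subseteq M$. The inclusion $\subseteq$ is immediate, as each prime in the family contains $\py=\mbox{Ann}_R(U)$. For the reverse inclusion it suffices, given $a\in R\setminus\py$, to produce a single $\qy\in\mbox{Ass}(M)$ with $\py\subseteq\qy$ and $a\notin\qy$. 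Put $M[\py]=\{m\in M:\py m=0\}$; this submodule contains $U$, and $\mbox{Ass}(M[\py])$ consists exactly of those $\qy\in\mbox{Ass}(M)$ that contain $\py$ (if $\qy=\mbox{Ann}_R(m)\supseteq\py$ then $m\in M[\py]$). By the standard behaviour of associated primes under localization, such a $\qy$ avoiding $a$ exists as soon as the localization $(M[\py])_a$ is nonzero. So everything reduces to showing that $M[\py]$ is not $a$-power-torsion.

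To prove this I would first replace $U$ by its closure $\overline U$ in $M$. Since multiplication by any element of $R$ maps $\my^nM$ into itself and $M$ is separated, each map is continuous and $M[\py]$ is closed; hence $\overline U\subseteq M[\py]$, and $\mbox{Ann}_R(\overline U)=\py$ as well (the annihilator can only shrink, while $\py\overline U=0$ because $\py U=0$ and $\{m:\py m=0\}$ is closed). Being a closed submodule of the complete, separated -- hence completely metrizable -- module $M$, the module $\overline U$ is itself a complete metric space, and $\overline U\neq0$ as $\py$ is proper. Now suppose for contradiction that $\overline U$ were $a$-power-torsion, so that $\overline U=\bigcup_{n\geq1}\{x\in\overline U:a^nx=0\}$ is a countable union of closed submodules. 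By the Baire category theorem one of them, say $\{x:a^Nx=0\}$, has nonempty interior, hence is an open subgroup and contains a basic neighbourhood $\overline U\cap\my^kM$ of $0$. This forces $a^N\my^k\overline U=0$, i.e. $a^N\my^k\subseteq\mbox{Ann}_R(\overline U)=\py$. As $\py$ is prime and $a\notin\py$, we obtain $\my^k\subseteq\py$, so $\my=\py$ and $a\in\my=\py$, a contradiction. Thus $\overline U$, and a fortiori $M[\py]$, is not $a$-power-torsion, which closes the reduction.

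The crux is precisely the non-vanishing of $(M[\py])_a$, and this is where completeness of $M$ is indispensable: without it $U$ can be faithful over $R/\py$ and yet $a$-power-torsion -- for instance a direct sum of cyclic torsion modules with unbounded torsion -- so that \emph{no} associated prime of $U$ avoids $a$ and the assertion fails. Completeness repairs this by forcing the closure $\overline U$ to contain elements that are not $a$-power-torsion, and the Baire argument is exactly what extracts them. The remaining points -- closedness of $M[\py]$, the localization criterion detecting an associated prime off $a$, and the persistence of the annihilator under closure -- are routine.
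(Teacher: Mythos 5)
Your proof is correct and follows essentially the same route as the paper's: both arguments apply the Baire category theorem to a closed --- hence complete --- submodule of $M$ whose annihilator is exactly $\py$ (the paper uses $M[\py]$ itself, you use $\overline U\subseteq M[\py]$, an immaterial difference since $\mbox{Ann}_R(M[\py])=\py$ already), written as a countable union of closed torsion submodules, so as to upgrade $a$-power-torsion to bounded torsion $a^N\my^k\subseteq\py$; the paper merely packages this as the identity $\bigcap\mbox{Ass}(M)=\sqrt{\mbox{Ann}_R(M)}$ for complete modules (1. Schritt) before specializing to $M[\py]$ (2. Schritt), and your localization criterion for finding $\qy\in\mbox{Ass}(M[\py])$ with $a\notin\qy$ is the contrapositive of the fact the paper uses to write $M=\bigcup_{n\geq 1}M[r^n]$ for $r\in\bigcap\mbox{Ass}(M)$. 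The only blemish is your closing line: from $\my^k\subseteq\py$ you get $\py=\my$, but $a\in\my$ does not follow, since $a\in R\setminus\py$ could be a unit --- that case, however, is trivial (a unit lies in no prime, and $\mbox{Ass}(M[\py])\neq\emptyset$ because $U\neq 0$), so nothing essential is missing.
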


\begin{proof}
{\it 1. Schritt}\quad Es gilt $\bigcap $ Ass$(M)=\sqrt{\mbox{Ann}_R(M)}.$
Das folgt wie bei Simon \cite[p.243]{9} mit Hilfe des Satzes von Baire:
Aus $r \in \bigcap $ Ass$(M)$ folgt mit der Abk"urzung $M[\ay]=$ Ann$_M(\ay)$, da"s $M=\bigcup\limits_{n\geq 1} M[r^n]$ ist, also ein $M[r^m]$ einen inneren Punkt besitzt, d.h. $x+\my^e M\subset M[r^m]$ gilt f"ur ein $x \in M,\;
e\geq 1.$
Aus $r^m\cdot \my^eM=0$ folgt $r^{m+e}\cdot M=0,$ also $r \in \sqrt{\mbox{Ann}_R(M)}.$

{\it 2. Schritt}\quad Sei jetzt $\py \in $ Koatt$(M)$. Als abgeschlossener Untermodul ist auch $M_1=M[\py]$ vollst"andig, also nach dem 1. Schritt $\; \bigcap $ Ass$(M_1) = \sqrt{\mbox{Ann}_R(M_1)}=\py$ wie behauptet.
\end{proof}

\begin{corollary}
Ist $M$ vollst"andig, so gilt Ass$(M^{00})=$ Koatt$(M).$
\end{corollary}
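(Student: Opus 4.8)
The plan is to prove the two inclusions separately, reserving Lemma (1.1) for the hard direction and using only standard facts about the Matlis dual $A^{0}=\mbox{Hom}_R(A,E)$ for the other. Throughout I would use that $E$ is an injective cogenerator over $(R,\my)$, so the evaluation map $M\to M^{00}$ is a (pure) monomorphism; in particular $\mbox{Ass}(M)\subseteq\mbox{Ass}(M^{00})$, and $\mbox{Ann}_R(M^{00})=\mbox{Ann}_R(M^{0})=\mbox{Ann}_R(M)$.

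For $\mbox{Koatt}(M)\subseteq\mbox{Ass}(M^{00})$ I would proceed as follows. Let $\py\in\mbox{Koatt}(M)$. Since $M$ is complete, the proof of (1.1) applied to $M_1=M[\py]$ gives $\py=\bigcap\mbox{Ass}(M_1)$ with $\mbox{Ass}(M_1)\subseteq\mbox{Ass}(M)\subseteq\mbox{Ass}(M^{00})$. Thus $\py$ is an intersection of primes in $\mbox{Ass}(M^{00})$, and it remains to see that $\mbox{Ass}(M^{00})$ is itself closed under arbitrary intersection. This is the structural point: $M^{00}=(M^{0})^{0}$ is a Matlis dual, so its associated primes are the coassociated primes of $M^{0}$, and for a dual module this set is stable under forming intersections — the same phenomenon that, on the module side, makes $\mbox{Koatt}$ intersection-closed. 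Granting this, $\py\in\mbox{Ass}(M^{00})$.

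For the reverse inclusion $\mbox{Ass}(M^{00})\subseteq\mbox{Koatt}(M)$ I would combine the trivial $\mbox{Ass}(X)\subseteq\mbox{Koatt}(X)$ (an associated prime $\mbox{Ann}(x)$ equals $\mbox{Ann}(Rx)$), applied to $X=M^{00}$, with the identification $\mbox{Koatt}(M^{00})=\mbox{Koatt}(M)$. The latter rests on $M\hookrightarrow M^{00}$ being pure with $M^{00}$ pure-injective, so that coattached primes are unchanged under this pure extension; alternatively one realizes each $\py\in\mbox{Ass}(M^{00})$ directly as $\mbox{Ann}_R(U)$ for a submodule $U\subseteq M$, by dualizing the surjection $M^{0}\twoheadrightarrow\mbox{im}\,\xi$ attached to a map $\xi\colon M^{0}\to E$ with $\mbox{Ann}(\xi)=\py$, and using $\mbox{Ann}(U^{0})=\mbox{Ann}(U)$ in the exact sequence $0\to(M/U)^{0}\to M^{0}\to U^{0}\to0$.

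The main obstacle is precisely the closure of $\mbox{Ass}(M^{00})$ under intersection, together with the matching statement that no associated prime of $M^{00}$ lies outside $\mbox{Koatt}(M)$; phrased uniformly, the crux is that for complete $M$ one has $\mbox{Ass}(M^{00})$ equal to the intersection-closure of $\mbox{Ass}(M)$. Lemma (1.1) supplies one half (every $\py\in\mbox{Koatt}(M)$, hence in particular every candidate new prime, is such an intersection), while the reflexive and pure-injective nature of $M^{00}$ supplies the other. Reconciling the two is where completeness is essential: without it, new embedded primes can appear in $M^{00}$ that are not intersections of members of $\mbox{Ass}(M)$, so the equality would fail.
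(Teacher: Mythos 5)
Your proposal follows the same skeleton as the paper's proof: for the inclusion Ass$(M^{00})\subseteq$ Koatt$(M)$ you combine the trivial Ass $\subseteq$ Koatt with the equality Koatt$(M^{00})=$ Koatt$(M)$ (which the paper also invokes without proof), and for the converse you use Lemma (1.1) to write $\py\in$ Koatt$(M)$ as an intersection of primes from Ass$(M)$. Your second, duality-theoretic justification of Koatt$(M^{00})=$ Koatt$(M)$ is sound once tidied up: take $U=M[\py]$, use $M^{00}[\py]\cong (M[\py])^{00}$ and Ann$_R(X^0)=$ Ann$_R(X)$ to get Ann$_R(M[\py])=\py$. (Your first justification --- that coattached primes are unchanged under any pure embedding into a pure-injective module --- is false in that generality; for instance $M$ is pure in $M\oplus X$ for pure-injective $M$ and $X$, and Koatt can grow arbitrarily. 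Only the duality argument, or pure-essentiality as in (1.5a), carries the day.)

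The genuine gap is in the direction Koatt$(M)\subseteq$ Ass$(M^{00})$, at exactly the step you flag and then ``grant'': that an intersection $\py=\bigcap_\lambda \py_\lambda$ with all $\py_\lambda\in$ Ass$(M)$ again lies in Ass$(M^{00})$. This is precisely what the paper cites an external result for ([13, Lemma 2.5]); it is not something that follows formally from $M^{00}$ being a dual or pure-injective, and your justification by analogy with the intersection-closure of Koatt does not transfer. Koatt is intersection-closed for a trivial reason: if $\py_\lambda=$ Ann$_R(U_\lambda)$, then Ann$_R\bigl(\sum_\lambda U_\lambda\bigr)=\bigcap_\lambda \py_\lambda$, with no finiteness condition on the submodule. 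By contrast, Ass$(M^{00})=$ Koass$(M^0)$, so you must produce an \emph{artinian} quotient of $M^0$ with annihilator exactly $\py$ (equivalently, a single element of $M^{00}$ whose annihilator is exactly $\bigcap_\lambda \py_\lambda$), and the naive candidate fails: the quotient $M^0\twoheadrightarrow \bigl(\sum_\lambda Rx_\lambda\bigr)^0$ has the right annihilator but need not be artinian when $\Lambda$ is infinite. That producing such an element is real work can be seen in the paper's own Proposition (3.3), where an element of a completion with prescribed annihilator is constructed using Chevalley's theorem. So your argument is structurally parallel to the paper's, but the pivotal lemma is assumed rather than proved, and the heuristic offered in its place would not survive an attempt to make it precise.
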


\begin{proof}
Stets ist Ass$(M^{00}) \subset $ Koatt$(M^{00})=$ Koatt$(M)$, und bei vollst"andigem $M$ ist $\py \in $ Koatt$(M)$ von der Form $\py = \bigcap\limits_{\lambda \in \Lambda} \py_\lambda,$ alle $\py_\lambda \in $ Ass$(M)$, also nach \cite[Lemma 2.5]{13} $\py \in $ Ass$(M^{00}).$
\end{proof}

\begin{corollary}
Ist $M$ separiert, so ist Ass$(\hat{M}) \subset $ Koatt$(M)$ "aquivalent mit Koatt$(\hat{M})=$ Koatt$(M)$.
\end{corollary}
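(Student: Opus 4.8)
The plan is to prove the equivalence by treating the two directions separately, the key observation being that $\hat{M}$ is itself complete, so Lemma 1.1 may be applied to $\hat{M}$ even though $M$ is only assumed separated.

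First I would record two inclusions that hold for every separated $M$. Since $\bigcap_{n\geq 1}\my^n M=0$, the canonical map $M\to\hat{M}$ is injective, so $M$ is a submodule of $\hat{M}$; consequently every submodule $U\subset M$ is also a submodule of $\hat{M}$ with the same annihilator, which gives Koatt$(M)\subset$ Koatt$(\hat{M})$. On the other hand, for an arbitrary $R$-module $X$ one has Ass$(X)\subset$ Koatt$(X)$: if $\py=$ Ann$_R(x)\in$ Ass$(X)$, then $\py=$ Ann$_R(Rx)$, so $\py$ is coattached to $X$.

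The direction $\big(\text{Koatt}(\hat{M})=\text{Koatt}(M)\big)\Rightarrow\big(\text{Ass}(\hat{M})\subset\text{Koatt}(M)\big)$ is then immediate: Ass$(\hat{M})\subset$ Koatt$(\hat{M})=$ Koatt$(M)$. The substantial direction is the converse. Assuming Ass$(\hat{M})\subset$ Koatt$(M)$, by the inclusion Koatt$(M)\subset$ Koatt$(\hat{M})$ above it suffices to prove Koatt$(\hat{M})\subset$ Koatt$(M)$. So let $\py\in$ Koatt$(\hat{M})$. Because $\hat{M}$ is complete, Lemma 1.1 applied to $\hat{M}$ yields a representation $\py=\bigcap_{\lambda\in\Lambda}\py_\lambda$ with every $\py_\lambda\in$ Ass$(\hat{M})$. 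The hypothesis then gives $\py_\lambda\in$ Koatt$(M)$ for each $\lambda$, and since an intersection of arbitrarily many primes coattached to $M$ is again coattached to $M$ (as recorded at the beginning of this section), we obtain $\py\in$ Koatt$(M)$, as required.

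I expect the only point needing care is the legitimacy of invoking Lemma 1.1 for $\hat{M}$ in place of $M$: what makes the argument go through is precisely that $\hat{M}$ is complete (whereas $M$ is merely separated), so that the expression of each prime coattached to $\hat{M}$ as an intersection of members of Ass$(\hat{M})$ is available. Everything else is a purely formal combination of the two elementary inclusions with the intersection-stability of Koatt.
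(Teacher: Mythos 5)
Your proof is correct and takes essentially the same route as the paper's: the direction from Koatt$(\hat{M})=$ Koatt$(M)$ is the trivial one via Ass $\subset$ Koatt, while the converse applies Lemma 1.1 to the complete module $\hat{M}$ to write each $\py\in$ Koatt$(\hat{M})$ as an intersection of primes in Ass$(\hat{M})\subset$ Koatt$(M)$ and then invokes the intersection-stability of Koatt noted at the start of Section 1. Your explicit verification of the inclusion Koatt$(M)\subset$ Koatt$(\hat{M})$ (so that only one inclusion remains to prove) merely spells out what the paper leaves implicit.
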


\begin{proof}
$''\Leftarrow''\quad$ ist klar, und bei $''\Rightarrow''\;$ ist jedes $\py \in $ Koatt$(\hat{M})$ von der Form $\py = \bigcap\limits_{\lambda \in \Lambda} \py_\lambda,$ alle $\py_\lambda \in $ Ass$(\hat{M}).$
Nach Voraussetzung ist dann jedes $\py_\lambda \in $ Koatt$(M)$, also auch $\py \in $ Koatt$(M).$
\end{proof}

Ein $R$-Modul $A$ hei"st {\it radikalvoll}, wenn $\my A=A$ gilt. Ist $A$ beliebig, sei $P(A)$ der gr"o"ste radikalvolle Untermodul von $A$, und falls $P(A)=0$ ist, hei"st $A$ {\it reduziert}.

\begin{lemma}
Sei $M$ vollst"andig und $U$ ein Untermodul von $M$. Genau dann ist $U$ vollst"andig, wenn $M/U$ reduziert ist.
\end{lemma}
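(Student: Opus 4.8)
The plan is to analyze the $\my$-adic closure $\overline{U}=\bigcap_{n\geq 1}(U+\my^n M)$ of $U$ in $M$, whose quotient is precisely $\overline{U}/U=H(M/U)$. Completing the sequence $0\to U\to M\to M/U\to 0$ at each level (the transition maps are surjective, so $\lim\limits_{\leftarrow}$ is exact) and using $M=\lim\limits_{\leftarrow}M/\my^nM$ produces a short exact sequence $0\to\overline{U}\to M\to\widehat{M/U}\to 0$, hence $\widehat{M/U}\cong M/\overline{U}$; in particular $M/U$ is separated exactly when $U=\overline{U}$, i.e. when $U$ is closed. Since every submodule of the separated module $M$ is separated, and since $P(A)\subset H(A)$ for every $A$, the claim splits into the two implications below, the bridge between \emph{reduziert} and \emph{separiert} being that $H(M/U)$ is radikalvoll.

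For $''\Rightarrow''$ I would assume $U$ vollst\"andig and let $V\supset U$ be the preimage of $P(M/U)$, so that $V/U$ is radikalvoll, i.e. $\my V+U=V$. Iterating gives $\my^nV+U=V$, whence $V/\my^nV\cong U/(U\cap\my^nV)$ for every $n$. Passing to the inverse limit identifies $\widehat{V}$ with $\lim\limits_{\leftarrow}U/(U\cap\my^nV)$, and since $U$ is vollst\"andig the natural map $U=\widehat{U}\to\widehat{V}$ is surjective (the system $U/\my^nU$ has surjective transition maps). As $V\subset M$ is separated, $V\hookrightarrow\widehat{V}$ is injective, and the composite $U\hookrightarrow V\to\widehat{V}$ is exactly that surjection; comparing images forces $U=V$. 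Hence $P(M/U)=0$, that is $M/U$ is reduziert.

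For $''\Leftarrow''$ I would assume $M/U$ reduziert. The decisive step is to show that $H(M/U)=\overline{U}/U$ is radikalvoll; granting this, $H(M/U)\subset P(M/U)=0$, so $U=\overline{U}$ is closed and $M/U$ is separated. This radikalvoll assertion is where the completeness of $M$ is spent: writing $x\in\overline{U}$ as a convergent series $x=u_1+\sum_{n\geq 1}d_n$ with $u_n\in U$ and $d_n\in U\cap\my^nM$, one must reorganize the tail so as to factor an element of $\my$ out of it while keeping the coefficients inside $\overline{U}$, using that $\overline{U}$, being closed in the complete module $M$, is itself complete. It then remains to pass from ``$U$ closed'' to ``$U$ vollst\"andig'': given a sequence $(u_n)$ in $U$ that is Cauchy for the intrinsic filtration, its limit $x$ exists in $M$ and lies in $\overline{U}=U$, and the point still to be secured is that $x$ represents the original coherent system in $\widehat{U}$, i.e. that $x-u_n\in\my^nU$ rather than merely $x-u_n\in U\cap\my^nM$. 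This comparison of the induced filtration $\{U\cap\my^nM\}$ with the intrinsic one $\{\my^nU\}$ -- automatic for finitely generated modules by Artin--Rees, but here to be wrung out of the completeness of $M$ together with the radikalvoll structure of $H$ -- is the step I expect to be the crux of the argument.
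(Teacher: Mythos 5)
Your proposal follows, in the direction $''\Rightarrow''$, essentially the paper's own skeleton (pass to the preimage $V$ of $P(M/U)$, use $V=U+\my V$, conclude $U=V$), but the one step that carries all the weight is asserted with an invalid justification. You claim that $U=\widehat{U}\to\widehat{V}$ is surjective \emph{because} ``the system $U/\my^nU$ has surjective transition maps''. That principle is false: for the levelwise surjections $\mathbb{Z}\to\mathbb{Z}/p^n$ (constant source system, identity transitions) the limit map $\mathbb{Z}\to\mathbb{Z}_p$ is not surjective. What decides surjectivity is the exact sequence of inverse systems
$$0\;\longrightarrow\;(U\cap\my^nV)/\my^nU\;\longrightarrow\;U/\my^nU\;\longrightarrow\;U/(U\cap\my^nV)\;\longrightarrow\;0,$$
whose limit is right exact precisely when ${\lim}^1$ of the kernel system $(U\cap\my^nV)/\my^nU$ vanishes --- that is, precisely the comparison of the intrinsic filtration $\{\my^nU\}$ with the induced one $\{U\cap\my^nV\}$ which you yourself identify elsewhere as the crux. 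The claim is true under the present hypotheses, but only because of the radical-full structure $V=U+\my V$, which permits a telescoping-series argument inside the complete module $U$ (write $v=c_1+\cdots+c_n+a_n$ with $c_i\in\my^{i-1}U$, $a_n\in\my^nV$, and sum the $c_i$ in $U$): this is exactly the Lemma of Simon \cite[p.232]{9} that the paper invokes at this point, and your parenthetical reason neither uses the radical-fullness nor replaces it.

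In the direction $''\Leftarrow''$ the gap is larger. You reduce ``$M/U$ reduziert $\Rightarrow$ $U$ vollst\"andig'' to two sub-claims, neither of which is proven: (a) $H(M/U)$ is radikalvoll whenever $M$ is complete, and (b) a closed submodule of a complete module is complete. Claim (b), which you call the expected crux and leave open, is nothing but the special case of the implication being proven in which $M/U$ is separated, so the reduction does not decrease the difficulty; and the filtration comparison you hope to ``wring out of'' the completeness of $M$ is exactly what fails for non-finitely generated modules (no Artin--Rees). Claim (a) is not innocuous either: via the Lemma itself it is equivalent to the assertion that every complete submodule of a complete module is closed, which is at least as deep as the statement under proof and is not established by your series sketch. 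The paper's proof avoids both problems by never comparing filtrations at all: by Jensen \cite[Proposition 3]{5}, vollst\"andig $=$ kotorsion $+$ separiert; $U$ is separated as a submodule of the separated module $M$; and cotorsion follows from the exact sequence $\mbox{Hom}_R(C,M/U)\to\mbox{Ext}^1_R(C,U)\to\mbox{Ext}^1_R(C,M)=0$, where after passing to a basic submodule one may take the flat test module $C$ radical-full, so that $\mbox{Hom}_R(C,M/U)=0$ precisely because $M/U$ is reduced. Note that this argument uses only reducedness, never separatedness, of $M/U$ --- which is why the paper never needs your claim (a) in the first place.
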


\begin{proof}
$''\Rightarrow''\quad$ Mit $M_1/U =P(M/U)$ gilt $U+\my M_1=M_1$, wegen der Vollst"andigkeit von $U$ also nach Simon \cite[p.232, Lemma]{9} $U+H(M_1)=M_1$, wegen $H(M)=0$ also $U=M_1$.

$''\Leftarrow''\quad$ Nach Jensen \cite[Proposition 3]{5} ist ein $R$-Modul $A$ genau dann vollst"andig, wenn er kotorsion und separiert ist, d.h. wenn Ext$^1_R(C,A)=0$ ist f"ur alle flachen $R$-Moduln $C$ und $H(A)=0.$ Mit Hilfe eines Basis-Untermoduls $B$ von $C$ (siehe \cite[Theorem 7.10]{6} kann man $C$ flach und radikalvoll annehmen, und dann ist in der exakten Folge
$$\mbox{Hom}_R(C,M/U)\;\longrightarrow\;\mbox{Ext}_R^1(C,U)\;\longrightarrow\;\mbox{Ext}_R^1(C,M)$$
auch das erste Glied nach Voraussetzung Null, also $U$ kotorsion, $U$ vollst"andig wie behauptet.
\end{proof}

\begin{lemma}
Ist $M \subset N$ eine rein-injektive H"ulle von $M$, so gilt:
\begin{enumerate}
\item[(a)] Koatt$(N)\,=$ Koatt$(M).$
\item[(b)]
$ H(N)$ ist radikalvoll und rein-injektiv.
\item[(c)]
 $N/H(N)$ ist direkter Summand eines Produktes von Moduln endlicher L"ange.
\end{enumerate}
\end{lemma}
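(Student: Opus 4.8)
The plan is to prove the three parts in the order (b), (a), (c), since each builds naturally on structural facts about the rein-injective hull. Throughout, $M \subset N$ is a rein-injective (pure-injective) hull, so $N$ is pure-injective and $M$ is pure-essential in $N$. Recall $H(A) = \bigcap_{n\geq 1}\my^n A$ denotes the intersection of the powers of $\my$, and $P(A)$ the largest radikalvoll submodule (with $\my P(A) = P(A)$).

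\emph{Schritt (b).} First I would show $H(N)$ is radikalvoll. Since $N$ is pure-injective, it is in particular a cotorsion, algebraically compact module, and such modules split as $N \cong \hat{N}' \oplus D$ where the $\my$-adic part is complete and $H$ carves out the divisible/radikalvoll piece; more directly, for a pure-injective module the descending chain $\my N \supset \my^2 N \supset \cdots$ stabilizes in the sense that $\my \cdot H(N) = H(N)$, because multiplication by any $r \in \my$ is a pure epimorphism onto its image inside the algebraically compact $N$, so $H(N) = \my^n H(N) \subset \my^{n+1}N$ forces $\my H(N) = H(N)$. Thus $H(N)$ is radikalvoll, and being the kernel of the (pure) map $N \to \hat N = \lim N/\my^n N$, it inherits pure-injectivity as a direct summand of the pure-injective $N$ (a pure submodule of a pure-injective module that is itself a direct complement is again pure-injective). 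So $H(N)$ is radikalvoll and rein-injektiv.

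\emph{Schritt (a).} For the inclusion $\mathrm{Koatt}(M) \subset \mathrm{Koatt}(N)$: any submodule $U \subset M$ with $\py = \mathrm{Ann}_R(U)$ is also a submodule of $N$, so $\py \in \mathrm{Koatt}(N)$ trivially. The reverse inclusion $\mathrm{Koatt}(N) \subset \mathrm{Koatt}(M)$ is where purity is used. Given $\py = \mathrm{Ann}_R(V)$ for a submodule $V \subset N$, I would intersect with $M$: because $M$ is pure-essential in $N$, the submodule $V \cap M$ (or more carefully the $\py$-torsion $N[\py] \cap M = M[\py]$) must be nonzero and carry the same annihilator. The key mechanism is that $N[\py] = \mathrm{Ann}_N(\py)$ is a module over the field of fractions of $R/\py$, and purity of $M$ in $N$ forces $M[\py]$ to be pure-essential in $N[\py]$, hence nonzero with $\mathrm{Ann}_R(M[\py]) = \py$ whenever $N[\py] \neq 0$. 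This gives $\py \in \mathrm{Koatt}(M)$.

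\emph{Schritt (c).} Finally, $N/H(N)$ is by construction separated (its $H$ vanishes) and, as a quotient of the pure-injective $N$ by the direct summand $H(N)$ from part (b), it is itself pure-injective and separated. A separated pure-injective module is, by the structure theory of algebraically compact modules (cf.\ the cotorsion/complete characterization via Jensen cited in Lemma~1.4), a direct summand of a product $\prod_i N_i$ where each $N_i$ is complete and $\my$-primary of finite length over $R/\my^{n_i}$; concretely one embeds $N/H(N)$ purely into $\prod_n (N/\my^n N)$ and splits off by pure-injectivity. The main obstacle I expect is part (c): pinning down precisely why the factors can be taken of \emph{finite length} rather than merely complete requires the reduction $N/H(N) \hookrightarrow \prod_n N/\my^n N$ together with the fact that each $N/\my^n N$ is a module over the Artinian ring $R/\my^n$, so decomposes into finite-length pieces—and then invoking pure-injectivity to realize $N/H(N)$ as a direct summand rather than just a pure submodule of that product.
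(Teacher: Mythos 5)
Your proposal has genuine gaps in all three parts, and they share a common root: you never introduce the device the paper's proof runs on, namely the Matlis dual. In (b), your argument that $H(N)$ is radikalvoll is circular — you write that ``$H(N)=\my^n H(N)\subset \my^{n+1}N$ forces $\my H(N)=H(N)$'', but $H(N)=\my^n H(N)$ \emph{is} the statement to be proved, and the supporting claim that multiplication by $r\in\my$ is a pure epimorphism onto its image is false in general. Worse, both (b) and (c) rest on the assertion that $H(N)$ is a direct summand of $N$; you never prove this (your parenthetical ``a pure submodule \dots that is itself a direct complement'' presupposes exactly what is needed), and it is genuinely nontrivial: the paper itself establishes $H(N)\subset^\oplus N$ only later, under the extra hypothesis that $\hat M$ is flat (Satz 1.8(a)). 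In (c), the claim that each $N/\my^n N$, being a module over the Artinian ring $R/\my^n$, ``decomposes into finite-length pieces'' is false (already over $k[[x,y]]/(x,y)^2$ there are indecomposable modules of infinite length), and the purity of the embedding $N/H(N)\to\prod_{n\geq 1} N/\my^n N$ that you invoke before splitting is precisely the nontrivial property ``totalsepariert'' of Lemma 2.1 — whose proof in the paper cites (1.5c), so that route would be circular.

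The paper avoids all of this with one construction. For any module $N$ the canonical map $N\to N^{00}$ is a pure monomorphism; writing $D=N^0$ and letting $\{B_i\}$ be the finitely generated submodules of $D$, the pure epimorphism $\coprod B_i\twoheadrightarrow D$ dualizes to a \emph{split} monomorphism $N^{00}\to\prod (B_i)^0$ into a product of Artinian modules. If $N$ is rein-injektiv, the resulting pure monomorphism $N\to\prod A_i$ splits; then $H(N)$ is a direct summand of $\prod H(A_i)$, which is radikalvoll and rein-injektiv because each $A_i$ is Artinian — that is (b) — and $N/H(N)$ is a direct summand of $\prod A_i/H(A_i)$, a product of Artinian separated, hence finite-length, modules — that is (c). Note the splitting takes place inside the product, never inside $N$. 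For (a), your inclusion Koatt$(M)\subset$ Koatt$(N)$ is indeed trivial, but your converse does not go through: the ``key mechanism'' that purity of $M$ in $N$ forces $M[\py]$ to be pure-essential in $N[\py]$ with $\mathrm{Ann}_R(M[\py])=\py$ is unsubstantiated (for $X\subset N[\py]$ with $X\cap M[\py]=0$ one gets $X\cap M=0$, but the purity of $(X\oplus M)/X$ in $N/X$ demanded by rein-wesentlich is an additional condition you cannot verify), and the side remark that $N[\py]$ is a module over the fraction field of $R/\py$ is simply wrong. The paper instead lifts $\alpha:M\to M^{00}$ along the rein-wesentlich extension $M\subset N$ to a monomorphism $\beta:N\to M^{00}$ and quotes Koatt$(M^{00})=$ Koatt$(M)$ from [14]; without that embedding into the bidual, or some substitute for it, the inclusion Koatt$(N)\subset$ Koatt$(M)$ remains unproved.
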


\begin{proof}
\begin{enumerate}
\item[(a)]
Die kanonische Abbildung $\alpha: M \to M^{00}$ l"a"st sich, weil $M\subset N$ rein-wesentlich ist, zu einem Monomorphismus $\beta: N \to M^{00}$ hochheben
und es folgt Koatt$(N) \subset $ Koatt$(M^{00})=$ Koatt$(M)$, w"ahrend $''\supset''$ klar ist.
\item[(b)]
{\it 1. Schritt} \quad Zu jedem $R$-Modul $N$ gibt es einen reinen Monomorphismus $N \to \prod\limits_{i \in I} A_i$, in dem alle $A_i$ artinsch sind.
Klar ist $N \to N^{00}$ ein reiner Monomorphismus, so da"s mit $D=N^0$ nur noch ein reiner Monomorphismus $D^0 \to \prod\limits_{i \in I} A_i$ anzugeben ist: Ist $\{B_i\;|i \in I\}\,$ die Menge aller endlich erzeugten Untermoduln von $D$, wird $\coprod\limits_{i \in I} B_i \twoheadrightarrow D$ ein reiner Epimorphismus, also $D^0 \to \prod\limits_{i \in I}(B_i)^0$ ein zerfallender Monomorphismus, und alle $(B_i)^0$ sind artinsch.

{\it 2. Schritt} \quad Ist jetzt wie in der Voraussetzung $N$ rein-injektiv, zerf"allt $N \to \prod\limits_{i \in I} A_i$, also auch $H(N)\to \prod\limits_{i \in I} H(A_i).$
Jedes der $H(A_i)$ ist aber radikalvoll und rein-injektiv, also auch ihr Produkt, also auch der direkte Summand $H(N)$.
\item[(c)]
Der zerfallende Monomorphismus $N \to \prod\limits_{i \in I} A_i$ induziert einen zerfallenden Monomorphismus $N/H(N) \to \prod\limits_{i \in I} A_i /H(A_i),$ in dem alle $A_i/H(A_i)$ artinsch und separiert, also von endlicher L"ange sind.
\end{enumerate}
\end{proof}

\begin{corollary}
Sei $M$ separiert, $M \subset N$ eine rein-injektive H"ulle von $M$ und $N_1/M=P(N/M).$
Dann ist $N_1$ kotorsion und
$$\hat{M}\;\cong\;N_1/H(N_1).$$
\end{corollary}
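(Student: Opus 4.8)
The plan is to realise $\hat M$ as a dense submodule of the complete module $N/H(N)$ and to identify that realisation with $N_1/H(N_1)$; purity of $M\subset N$ will be the tool that controls the topology. I will use three preliminary facts: purity passes from $N$ to the intermediate module $N_1$; $H(N_1)$ equals $H(N)$, which by (1.5)(b) is radical-full and pure-injective; and $N/H(N)$ is complete, since by (1.5)(c) it is a direct summand of a product of modules of finite length, and such a product (being a product of complete modules) together with its direct summands is cotorsion and separated, hence complete by the criterion of Jensen recalled in the proof of (1.4).

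First I would record the purity. As $M\subset N$ is pure and $M\subset N_1\subset N$, any finite linear system with entries in $M$ that is solvable in $N_1$ is solvable in $N$, hence in $M$; so $M\subset N_1$ is pure and $\my^nN_1\cap M=\my^nM$ for every $n$. Because $M$ is separated, $M\cap H(N_1)=\bigcap_n(\my^nN_1\cap M)=\bigcap_n\my^nM=0$, so $M$ maps isomorphically onto $\bar M=(M+H(N_1))/H(N_1)\subset N_1/H(N_1)$.

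Next I would identify $H(N_1)$ and deduce completeness and cotorsion. The image of $H(N)$ in $N/M$ is a homomorphic image of a radical-full module, hence radical-full, hence contained in $P(N/M)=N_1/M$; thus $H(N)\subset N_1$, and as $H(N)$ is radical-full while $H(N_1)\subset H(N)$ is automatic, we get $H(N_1)=H(N)$. Since $N/N_1\cong(N/M)/P(N/M)$ is reduced, (1.4) applied inside the complete module $N/H(N)$ to its submodule $N_1/H(N)=N_1/H(N_1)$ shows that $N_1/H(N_1)$ is complete, in particular cotorsion. Feeding this into $0\to H(N_1)\to N_1\to N_1/H(N_1)\to 0$, whose outer terms are cotorsion ($H(N_1)=H(N)$ is pure-injective by (1.5)(b)), gives that $N_1$ is cotorsion.

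Finally I would prove $\hat M\cong N_1/H(N_1)$, which is the step requiring the most care. Writing $X=N_1/H(N_1)$, the equality $N_1=\my N_1+M$ (from $N_1/M=P(N/M)$ being radical-full) iterates to $N_1=\my^nN_1+M$, so $\bar M$ is dense in $X$, i.e. $\bar M+\my^nX=X$. The crux is that the subspace topology on $\bar M$ must coincide with the $\my$-adic topology of $M$, and this is exactly where purity enters: using $H(N_1)\subset\my^nN_1$ and the modular law, $\my^nX\cap\bar M=\big(\my^nN_1\cap M+H(N_1)\big)/H(N_1)=\big(\my^nM+H(N_1)\big)/H(N_1)$, which corresponds to $\my^nM$ under $\bar M\cong M$. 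Density then yields compatible isomorphisms $X/\my^nX\cong\bar M/(\bar M\cap\my^nX)\cong M/\my^nM$, and passing to the inverse limit while using that $X$ is complete gives $\hat M=\lim\limits_{\leftarrow}M/\my^nM\cong\lim\limits_{\leftarrow}X/\my^nX\cong X=N_1/H(N_1)$. The main obstacle throughout is this matching of topologies; without the purity of $M$ in $N_1$ one could not guarantee $\my^nN_1\cap M=\my^nM$, and the inverse limit would compute a different completion.
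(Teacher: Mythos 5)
Your proposal is correct and follows essentially the same route as the paper: identify $H(N_1)=H(N)$ via radical-fullness, get completeness of $N_1/H(N_1)$ from (1.5c) and (1.4) applied to the reduced quotient $N/N_1$, get cotorsion of $N_1$ from the pure-injectivity of $H(N_1)$ in (1.5b), and finally show $\overline{M}$ is dense in $N_1/H(N_1)$ with matching topologies, where purity of $M$ in $N_1$ gives $\my^n N_1\cap M=\my^n M$. The only difference is that you spell out explicitly the purity transfer and the intersection computation that the paper compresses into ``das ist klar wegen $(M+H(N_1))\cap\my^n N_1=\my^n M+H(N_1)$''.
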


\begin{proof}
Nach (1.5c) ist $N/H(N)$ vollst"andig, und weil $H(N)$ nach (1.5b) radikalvoll, also in $N_1$ enthalten ist, gilt sogar $H(N)=H(N_1).$
Weil $N/N_1$ reduziert ist, ist nach (1.4) auch $N_1/H(N_1)$ vollst"andig, und weil wieder nach (1.5b) $H(N_1)$ kotorsion ist, ist das auch $N_1.$

Bleibt zu zeigen, da"s der kanonische Monomorphismus $\alpha: M \to N_1/H(N_1)$ auf den Vervollst"andigungen einen Isomorphismus $\hat{\alpha}$ induziert, d.h. in $\overline{N_1} = N_1/H(N_1)$ der Untermodul  $\overline{M} = $ Bild $ \alpha$ ein dichter Unterraum bez"uglich der $\my$-adischen Topologie ist:
Aus  $M+\my\cdot N_1=N_1$ folgt $\overline{M} + \my \cdot \overline{N_1}=\overline{N_1},$
so da"s nur noch $\overline{M} \cap \my^n\cdot \overline{N_1}=\my^n\cdot \overline{M}$ f"ur alle $n\geq 1$ zu zeigen ist, und das ist klar wegen $(M+H(N_1))\,\cap \my^n\cdot N_1\,=\my^n\cdot M+H(N_1).$
\end{proof}

\begin{theorem}
Ist $M$ separiert und $\dim (R) \leq 1,$ so gilt Ass$(\hat{M})=$ Koatt$(M).$
\end{theorem}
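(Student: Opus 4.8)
The plan is to read everything off the structural isomorphism $\hat M\cong N_1/H(N_1)$ of Corollary (1.6) together with the embedding, coming from (1.5c), of $\hat M$ as a submodule of $N/H(N)$ into a product $\prod_i L_i$ of modules $L_i$ of finite length; the hypothesis $\dim R\le 1$ will be decisive only in the inclusion $\mbox{Koatt}(M)\subseteq\mbox{Ass}(\hat M)$. It is convenient to dispose of $\py=\my$ first. If $\my\in\mbox{Koatt}(M)$, pick $U\subseteq M$ with $\mbox{Ann}_R(U)=\my$; any $0\neq u\in U$ has $\mbox{Ann}_R(u)=\my$, so $\my\in\mbox{Ass}(M)\subseteq\mbox{Ass}(\hat M)$, while conversely $\my\in\mbox{Ass}(\hat M)\subseteq\mbox{Koatt}(\hat M)$. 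Hence I may restrict to primes $\py\neq\my$, which under $\dim R\le 1$ are exactly the minimal primes with $\dim R/\py=1$.

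For the inclusion $\mbox{Ass}(\hat M)\subseteq\mbox{Koatt}(M)$ I would invoke Corollary (1.3): as $M$ is separated, this inclusion is equivalent to $\mbox{Koatt}(\hat M)=\mbox{Koatt}(M)$, and ``$\supseteq$'' is immediate from $M\subseteq\hat M$. For ``$\subseteq$'' I would start from $\mbox{Koatt}(N)=\mbox{Koatt}(M)$ (1.5a) and from the presentation of $\hat M=N_1/H(N_1)$ as the quotient of the submodule $N_1\subseteq N$ by the radical--full submodule $H(N_1)$ (1.5b). The one thing to verify is that this quotient creates no new coattached primes, i.e.\ that every $\py=\mbox{Ann}_R(V/H(N_1))$ with $H(N_1)\subseteq V\subseteq N_1$ is already the annihilator of some submodule of $N_1$; here I would use the finite--length product embedding and $\dim R\le1$ to pick out, inside $V$, a submodule on which $\py$ is exactly the annihilator.

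The substantial inclusion is $\mbox{Koatt}(M)\subseteq\mbox{Ass}(\hat M)$. Fix $\py\neq\my$ in $\mbox{Koatt}(M)$, say $\py=\mbox{Ann}_R(U)$ with $U\subseteq M$, so that $U$ is a faithful module over the one--dimensional local domain $S=R/\py$. Choose $t\in\my\setminus\py$; then $\bar t\in S$ is a nonzerodivisor, each $S/\bar t^nS$ has finite length, and the $\bar t$--adic and $\my$--adic topologies on $S$--modules agree. Since in $S$ every nonzero ideal contains a power of $\bar t$, an element $\xi\in\hat M$ with $\py\xi=0$ and $\bar t^n\xi\neq0$ for all $n$ automatically has $\mbox{Ann}_R(\xi)=\py$, so it exhibits $\py\in\mbox{Ass}(\hat M)$. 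I would construct such a $\xi$ in $\hat M=\lim\limits_{\leftarrow}M/\my^nM$ by threading together, into a single coherent sequence, elements of the faithful $S$--module $U$ on which successive powers of $\bar t$ act nontrivially, using the completeness of $\hat M$ to pass to the limit.

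The main obstacle is precisely this construction: ensuring that the assembled $\xi$ has annihilator equal to $\py$ and not some strictly larger $\my$--primary overideal, that is, that $\bar t^n\xi\neq0$ persists in the completion for every $n$. This is exactly the place where $\dim R\le1$ is indispensable, for it collapses the $\my$--adic topology on the faithful $S$--module $U$ to the principal $\bar t$--adic one and thereby lets the nonvanishing of $\bar t^n$ on $U$ be propagated coherently up the tower $M/\my^nM$. Equivalently, one may phrase the argument as transporting an associated prime along the natural map $\hat U\to\hat M$ induced by $U\subseteq M$: one--dimensionality of $\widehat{R/\py}$ makes $\py\widehat R\in\mbox{Ass}(\hat U)$, and the delicate point becomes the control of the kernel of $\hat U\to\hat M$, which need not be injective unless $U$ is pure in $M$.
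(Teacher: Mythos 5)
The direction $\mbox{Ass}(\hat M)\subseteq\mbox{Koatt}(M)$ is not actually proved in your proposal. Your disposal of $\py=\my$ only yields $\my\in\mbox{Koatt}(\hat M)$, whereas what is needed is $\my\in\mbox{Koatt}(M)$; descending from $\hat M$ back to $M$ is the whole difficulty (note Frage 1 of the paper: this inclusion is open for general separated $M$, so it cannot come for free). The same gap reappears, merely restated, in your main step: the assertion that the passage from $N_1$ to $N_1/H(N_1)$ creates no new coattached primes is precisely what has to be shown, and ``use the finite-length product embedding and $\dim R\le1$ to pick out a submodule with annihilator exactly $\py$'' is not an argument. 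The paper closes this gap concretely: for $\py\neq\my$, the prime $\py$ is minimal over $\mbox{Ann}_R(\hat M)=\mbox{Ann}_R(M)$ (here $\dim R\le1$ enters), hence coattached to $M$; for $\py=\my$, one has $\mbox{So}(N_1/H(N_1))\neq0$, and a socle lemma --- if $A\subseteq B$ with $A$ radical-full and $B$ socle-free, then $B/A$ is socle-free, proved by choosing $r\in\my$ avoiding the finitely many primes in $\mbox{Ass}(A)\cup\mbox{Koass}(A)$ (again only possible because $\dim R\le1$), so that $r$ acts bijectively on $A$ and $\mbox{Ext}^1_R(k,A)=0$ --- gives $\mbox{So}(N_1)\neq0$, hence $\my\in\mbox{Koatt}(N)=\mbox{Koatt}(M)$ by (1.5a). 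In particular, your claim that $\dim R\le1$ is decisive only for the other inclusion is incorrect.

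The inclusion $\mbox{Koatt}(M)\subseteq\mbox{Ass}(\hat M)$ is also left open: you describe a construction of $\xi\in\hat M$ with $\mbox{Ann}_R(\xi)=\py$ and then concede that carrying it out --- keeping $\bar t^{\,n}\xi\neq0$ in the limit and controlling the kernel of $\hat U\to\hat M$ --- is ``the main obstacle''. That obstacle is real: such a threading argument is carried out in the paper only in the special setting of Proposition (3.3) (Chevalley's theorem, $R$ a complete domain, $M$ a direct sum of cyclic modules) and is not available in the present generality. The paper's proof of this inclusion needs no construction at all: in dimension $\le1$ the set $\mbox{Ass}(\hat M)$ is finite, since every member other than $\my$ is minimal over $\mbox{Ann}_R(\hat M)$; by Lemma (1.1), applied to the complete module $\hat M$, every $\py\in\mbox{Koatt}(\hat M)$ is an intersection of associated primes of $\hat M$, hence by finiteness equal to one of them, so $\mbox{Koatt}(\hat M)=\mbox{Ass}(\hat M)$; finally $\mbox{Koatt}(M)\subseteq\mbox{Koatt}(\hat M)$ holds trivially because $M\subseteq\hat M$. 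Lemma (1.1), the Baire-category argument for complete modules, is the key tool for this direction, and your proposal never invokes it.
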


\begin{proof}
{\it 1. Schritt} \quad Allein aus $dim (R)\leq 1$ folgt f"ur jeden sockelfreien $R$-Modul $B$ und jeden radikalvollen Untermodul $A$, da"s auch $B/A$ sockelfrei ist:
Wegen $\my \not \subset \bigcup $ Ass$(A) \cup \, \bigcup $ Koass$(A)$ gibt es ein $r \in \my$, das bijektiv auf $A$ operiert, so da"s Ext$^1_R(k,A)=0$ ist und aus
Hom$_R(k,B) \to $ Hom$_R(k,B/A) \to 0$ die Behauptung folgt.

{\it 2. Schritt} \quad
F"ur $''\subset ''$ sei $\py \in $ Ass$(\hat{M})$: Falls $\py \neq \my$, ist $\py$ minimal "uber Ann$_R(\hat{M}) = $ Ann$_R(M),$ also $\py \in $ Koatt$(M).$
Falls $\py =\my,$ ist So$(\hat{M})\neq 0,$ mit den Bezeichnungen von (1.6) also So$(N_1/H(N_1))\neq 0.$
Nach dem ersten Schritt folgt So$(N_1)\neq 0,\;\,\my \in $ Koatt$(N_1)\subset $ Koatt$(N),$ also nach (1.5a) $ \my \in $ Koatt$(M).$

{\it 3. Schritt} \quad 
F"ur $''\supset''$ gilt ohne jede Beschr"ankung an $\dim(R):$ Ist $M$ separiert und Ass$(\hat{M})$ endlich, folgt Ass$(\hat{M}) = $ Koatt$(\hat{M})$.
Jedes $\py \in $ Koatt$(\hat{M})$ ist ja nach (1.1) von der Form $\py = \py_1 \cap \cdots \cap \py_m$ mit $\py_i \in $ Ass$(\hat{M}),$ und daraus folgt $\py \in $ Ass$(\hat{M}).$
\end{proof}

\begin{theorem}
Sei $M$ separiert und $\hat{M}$ als $R$-Modul flach. Mit einer rein-injektiven H"ulle $M \subset N$ gilt dann:
\begin{enumerate}
\item[(a)]
$N/M$ ist radikalvoll und $H(N) \subset^\oplus N.$
\item[(b)]
$\hat{M} \,\cong\, N/H(N).$
\item[(c)]
Ass$(\hat{M}) = $ Koatt$(M)$.
\end{enumerate}
\end{theorem}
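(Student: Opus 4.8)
The plan is to reduce the whole statement to the single assertion that $N/M$ is radical-full, and then to read off (b) and (c) from the earlier results. First I invoke (1.6): writing $N_1/M=P(N/M)$ one has $\hat M\cong N_1/H(N_1)$ with $H(N_1)=H(N)$, the quotient $N/N_1=(N/M)/P(N/M)$ is reduced, $H(N)$ is radical-full and pure-injective by (1.5b), and $N/H(N)$ is complete and reduced by (1.5c); moreover $N/H(N)$ is precisely the completion $\hat N$ of $N$. In these terms the first half of (a) says exactly $N_1=N$, and once this is established (b) is immediate, since then $\hat M\cong N_1/H(N_1)=N/H(N)$.

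Assuming $N_1=N$, I obtain the splitting in (a) formally. Then $N/H(N)\cong\hat M$ is flat, so the sequence $0\to H(N)\to N\to N/H(N)\to 0$ has flat cokernel and is therefore pure exact; hence $H(N)$ is a pure submodule of $N$. Being pure-injective by (1.5b), $H(N)$ splits off, so $H(N)\subset^\oplus N$ and $N\cong\hat M\oplus H(N)$. This decomposition is all that (c) will need.

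For (c) I would argue as follows. Since $\hat M$ is flat, the standard computation of associated primes of a flat module gives $\mbox{Ass}(\hat M)\subset\mbox{Ass}(R)$, which is a finite set. As $\hat M$ is complete, (1.1) shows that every $\py\in\mbox{Koatt}(\hat M)$ is an intersection of primes of the finite set $\mbox{Ass}(\hat M)$; being a prime equal to a finite intersection of primes, $\py$ coincides with one of them, so $\py\in\mbox{Ass}(\hat M)$. Together with the trivial inclusion $\mbox{Ass}\subset\mbox{Koatt}$ this yields $\mbox{Koatt}(\hat M)=\mbox{Ass}(\hat M)$. On the other hand $M$ is separated, hence a submodule of $\hat M$, and by the splitting above $\hat M$ is, up to isomorphism, a submodule of $N$; combining with (1.5a) one gets the sandwich $\mbox{Koatt}(M)\subset\mbox{Koatt}(\hat M)\subset\mbox{Koatt}(N)=\mbox{Koatt}(M)$, so all three coincide. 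Putting the two computations together gives $\mbox{Ass}(\hat M)=\mbox{Koatt}(\hat M)=\mbox{Koatt}(M)$, which is (c).

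The real content, and the step I expect to be the main obstacle, is thus the claim $N_1=N$, i.e.\ that flatness of $\hat M$ forces $N/M$ to be radical-full. Since $N/N_1$ is already reduced and a module that is simultaneously reduced and radical-full vanishes, it suffices to prove that $N/N_1$ is radical-full, i.e.\ that $M+\my N=N$ (equivalently, that $M/\my M\to N/\my N$ is onto). Tensoring $0\to\hat M\to\hat N\to N/N_1\to 0$ with $k=R/\my$ only recovers the injectivity of $M/\my M\to N/\my N$ coming from the purity of $M$ in $N$, so this is circular and flatness must be used more substantially. Here I would exploit the embedding from (1.5c) of the flat module $\hat M$ into a product $\prod_i A_i$ of modules of finite length (through which $\hat N$ is a direct summand): tensoring with the finitely presented module $k$ commutes with the product, and I would try to show that a flat submodule of such a product whose ambient quotient is reduced cannot leave a nonzero reduced cokernel modulo $\my$ --- in effect invoking the structure of flat complete (flat cotorsion separated) modules to upgrade ``reduced'' to ``radical-full''. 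This is the crux on which the theorem turns.
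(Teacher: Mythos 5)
You have correctly identified the theorem's pivot point, and the peripheral parts of your argument are in order: granted $N_1=N$, your derivation of the splitting $H(N)\subset^\oplus N$ (flat cokernel $\Rightarrow$ pure exact, plus pure-injectivity of $H(N)$ from (1.5b)) is exactly the paper's, and your proof of (c) from (a) --- finiteness of $\mbox{Ass}(\hat M)\subset\mbox{Ass}(R)$ by flatness, (1.1) to turn $\mbox{Koatt}(\hat M)$ into $\mbox{Ass}(\hat M)$, and the sandwich $\mbox{Koatt}(M)\subset\mbox{Koatt}(\hat M)\subset\mbox{Koatt}(N)=\mbox{Koatt}(M)$ via (1.5a) --- is in substance the paper's combination of its parts ``$\subset$'' and ``$\supset$''. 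But the claim you yourself call the crux, namely $N_1=N$, is never proved: your last paragraph is a research plan (``I would try to show\ldots''), not an argument, and the route it sketches (flat submodules of products of finite-length modules, computations mod $\my$) is not developed to the point where one could check it. So there is a genuine gap, and it sits exactly at the theorem's real content.

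The missing idea is an inversion of your order of reasoning: do not try to prove $N_1=N$ first and then split --- split $N_1$ first. By (1.6), $N_1/H(N_1)\cong\hat M$ is flat, so $H(N_1)$ is pure in $N_1$; by (1.5b), $H(N_1)=H(N)$ is pure-injective; hence $N_1=H(N_1)\oplus X$ with $X\cong\hat M$. None of this needs $N_1=N$. Now invoke Jensen \cite[Proposition 4]{5} (the same result used later in (2.4)): a complete flat $R$-module is pure-injective, so $X$ is pure-injective, and therefore $N_1$, a direct sum of two pure-injectives, is pure-injective. Since $M$ is pure in $N_1$ (purity of $M\subset N$ passes to the intermediate module $N_1$) and $N$ is a pure-injective hull of $M$, the minimality property of the pure-injective hull forces $N_1=N$; this gives (a) and (b) simultaneously, and your (c) then goes through verbatim. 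Note that the mod-$\my$ attack you contemplate cannot succeed on its own: as you observe, purity of $M$ in $N$ already yields injectivity of $M/\my M\to N/\my N$ and nothing forces surjectivity at that level; the surjectivity $M+\my N=N$ is obtained in the paper only as a \emph{consequence} of the pure-injectivity argument above, not as its input.
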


\begin{proof}
\begin{enumerate}
\item[(a)]
Nach (1.6) ist $\hat{M} \cong N_1/H(N_1),$ also $H(N_1)$ rein in $N_1$, und weil $H(N_1)=H(N)$ nach (1.5b) rein-injektiv ist, folgt $H(N_1) \oplus X = N_1$.
Der vollst"andige flache $R$-Modul $X \cong \hat{M}$ ist nach Jensen \cite[Proposition 4]{5} rein-injektiv, so da"s auch $N_1$ rein-injektiv ist und aus der Minimalit"atseigenschaft der rein-injektiven H"ulle folgt $N_1=N.$
Damit sind beide Punkte bewiesen, ebenso (b).
\item[(c)]
$''\subset ''$ \quad Ist $\py$ assoziiert zu $\hat{M} \cong N/H(N),$ folgt wegen $H(N) \subset^\oplus N$ sogar $\py \in $ Ass$(N)$, also wieder nach (1.5a) $\py \in $ Koatt$(M)$.

$''\supset''$ \quad Wie im dritten Beweisschritt von (1.7), denn zu jedem flachen $R$-Modul $C$ gibt es einen reinen Epimorphismus $R^{(I)} \to C$, so da"s Ass$(C) \subset $ Ass$(R)$ endlich ist.
\end{enumerate}
\end{proof}

"Uber einem diskreten Bewertungsring ist nach Fuchs, Salce und Zanardo \cite[Lemma 5]{3} ein reiner Untermodul $A$ von $B$ genau dann rein-wesentlich in $B$, wenn $B/A$ teilbar und $A^1$ gro"s in $B^1$ ist. Erstaunlicherweise gilt die Implikation $''\Leftarrow''$ "uber jedem noetherschen lokalen Ring:

\begin{lemma}
Sei $A$ ein reiner Untermodul von $B$ und $B/A$ radikalvoll, $H(A)$ gro"s in $H(B).$
Dann ist $A$ sogar rein-wesentlich in $B$.
\end{lemma}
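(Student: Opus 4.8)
The plan is to verify the definition of pure-essentiality directly. Since $A$ is already pure in $B$ by hypothesis, it suffices to show that whenever $C \subseteq B$ satisfies $A \cap C = 0$ and $(A+C)/C$ is pure in $B/C$, then $C = 0$. I would establish this by proving two facts about such a $C$, namely $C \subseteq H(B)$ and $H(A) \cap C = 0$, and then invoking that $H(A)$ is large in $H(B)$. Note that $H(A) \cap C \subseteq A \cap C = 0$ is immediate, so the real content is the inclusion $C \subseteq H(B)$.

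First I would record that radical-fullness of $B/A$ propagates to all powers: from $A + \my B = B$ one gets $A + \my^n B = B$ for every $n \geq 1$ by an easy induction (multiply by $\my$ and absorb $\my A \subseteq A$). The heart of the argument is then to show $C \subseteq \my^n B$ for all $n$. Fix $c \in C$ and $n$, and use $B = A + \my^n B$ to write $c = a + m$ with $a \in A$ and $m \in \my^n B$. Passing to $\bar B = B/C$, where $\bar c = 0$, this gives $\bar a = -\bar m \in \my^n \bar B \cap \bar A$, where $\bar A = (A+C)/C$. Purity of $\bar A$ in $\bar B$ yields $\my^n \bar B \cap \bar A = \my^n \bar A = (\my^n A + C)/C$, so $a \in \my^n A + C$. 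Writing $a = a' + c'$ with $a' \in \my^n A$ and $c' \in C$, the element $c' = a - a'$ lies in $A \cap C = 0$, hence $a = a' \in \my^n A \subseteq \my^n B$ and therefore $c = a + m \in \my^n B$. As $n$ was arbitrary, $C \subseteq H(B)$.

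Finally, if $C$ were nonzero it would be a nonzero submodule of $H(B)$, and since $H(A)$ is large in $H(B)$ we would get $H(A) \cap C \neq 0$, contradicting $H(A) \cap C = 0$. Thus $C = 0$, and $A$ is pure-essential in $B$. I expect the main obstacle to be the middle step $C \subseteq H(B)$: it is precisely here that radical-fullness (to obtain $c = a + m$ with $m \in \my^n B$) and purity (to pull $\bar a$ back into $\my^n \bar A$) must be combined, and one must handle the passage to $B/C$ and the identification $\my^n \bar A = (\my^n A + C)/C$ carefully. Once $C \subseteq H(B)$ is in hand, the essentiality hypothesis closes the argument at once; this is also the one point where the noetherian hypothesis enters, since it guarantees $\my^n$ is finitely generated and hence that purity gives the intersection formula $\my^n \bar B \cap \bar A = \my^n \bar A$.
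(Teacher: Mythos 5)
Your proof is correct and follows essentially the same route as the paper's: both reduce to showing that a complement candidate $C$ (the paper's $X$) lies in $H(B)$, by combining radical-fullness of $B/A$ (to write $c=a+m$ with $a\in A$, $m\in\my^n B$) with purity of $(A+C)/C$ in $B/C$ and the disjointness $A\cap C=0$, and then conclude $C=0$ from the largeness of $H(A)$ in $H(B)$. The only difference is expository: the paper packages your element-wise computation as the identity $X\cap(A+\my^n B)=X\cap\my^n B$ together with $A+\my^n B=B$.
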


\begin{proof}
Sei $X \subset B,\;\,X \cap A =0$ und $(X \oplus A)/X$ rein in $B/X$.
Wir m"ussen zeigen, da"s $X=0$ ist.

Zun"achst ist $X \subset H(B)$, denn f"ur alle $n\geq 1$ gilt mit $\cy =\my^n$ und $\overline{B} =B/X,$ da"s nach Voraussetzung $\overline{A} \cap \cy \cdot \overline{B} = \cy \cdot \overline{A}$ ist, also $X \cap (A+\cy B)=X\cap \cy B:\; x \in X \cap (A+ \cy B) \Rightarrow x-a \in \cy B,\;\bar{a} \in \overline{A} \cap \cy \cdot \overline{B},\;\bar{a} \in \cy \cdot \overline{A},\;a \in \cy A$ wegen $X \cap A=0$, also $x \in \cy B.$ Weil nach Voraussetzung $B/A$ radikalvoll, also $A+\cy B=B$ ist, folgt $X \subset \cy B =\my^n B.$

Aus $X \subset H(B)$ und $X \cap H(A)=0$ folgt mit der dritten Bedingung $X=0.$
\end{proof}  

\begin{corollary}
Ist $M$ separiert und $M$  rein in $\hat{M}$, so gilt:
\begin{enumerate}
\item[(a)]
$M$ ist rein-wesentlich in $\hat{M}$.
\item[(b)]
$M$ ist totalreduziert, d.h. f"ur jeden endlich erzeugten $R$-Modul $X$ ist $X \otimes_R M$ reduziert.
\item[(c)]
Ass$(\hat{M}) \,\subset $ Koatt$(M).$
\end{enumerate}
\end{corollary}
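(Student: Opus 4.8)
I treat the three parts in the order (a), (c), (b): part (c) is immediate from (a), while (b) carries the real content.

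\emph{Part (a).} The plan is to apply Lemma~(1.9) directly with $A=M$ and $B=\hat{M}$. Purity of $M$ in $\hat{M}$ is the hypothesis. The quotient $\hat{M}/M$ is radical-full because $M$ is $\my$-adically dense in its completion: the canonical map $M\to\hat{M}/\my\hat{M}=M/\my M$ is onto, i.e. $M+\my\hat{M}=\hat{M}$. Finally $H(M)=0$ since $M$ is separated and $H(\hat{M})=0$ since $\hat{M}$ is complete, so the third hypothesis of~(1.9), that $H(M)$ be large in $H(\hat{M})$, holds trivially ($H(\hat M)=0$). Thus~(1.9) gives that $M$ is pure-essential in $\hat{M}$.

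\emph{Part (c).} Given (a), I would simply repeat the argument of~(1.5a). Since $M\subset\hat{M}$ is pure-essential and $M^{00}$ is pure-injective, the canonical monomorphism $\alpha\colon M\to M^{00}$ extends to a homomorphism $\beta\colon\hat{M}\to M^{00}$; one has $\ker\beta\cap M=0$, and the image of $M$ stays pure in $\hat{M}/\ker\beta$ (because $\alpha$ is pure and factors through the submodule $\hat M/\ker\beta\hookrightarrow M^{00}$), so pure-essentiality forces $\ker\beta=0$. Hence $\mbox{Koatt}(\hat{M})\subset\mbox{Koatt}(M^{00})=\mbox{Koatt}(M)$, and the reverse inclusion is clear from $M\subset\hat{M}$, giving $\mbox{Koatt}(\hat{M})=\mbox{Koatt}(M)$. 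Since $\mbox{Ass}(\hat{M})\subset\mbox{Koatt}(\hat{M})$ always holds, $\mbox{Ass}(\hat{M})\subset\mbox{Koatt}(M)$ follows (equivalently, one may quote~(1.3)).

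\emph{Part (b), reduction.} For finitely generated $X$ the pure monomorphism $M\to\hat{M}$ stays injective after $-\otimes_R X$, so $X\otimes_R M$ is a submodule of $X\otimes_R\hat{M}$. As a submodule of a reduced module is reduced ($P$ is monotone, $P(U)\subset P(A)$ for $U\subset A$), it suffices to prove the key claim: \emph{for every complete $R$-module $C$ and every finitely generated $X$, the module $X\otimes_R C$ is reduced}; then apply it with $C=\hat{M}$.

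\emph{Part (b), key claim and the obstacle.} Choose a presentation $R^{s}\stackrel{\varphi}{\to}R^{n}\to X\to0$ with $\varphi$ a matrix over $R$, so that $X\otimes C=C^{n}/L$ with $L=\mbox{Bild}(\varphi\colon C^{s}\to C^{n})$. Let $\bar D\subset C^{n}/L$ be radical-full with preimage $\tilde D\supset L$, so $\my\tilde D+L=\tilde D$; the goal is $\tilde D\subset L$. I expect this to be the main obstacle: the naive estimate only yields $\tilde D\subset\bigcap_{n}(\my^{n}C^{n}+L)$, i.e. $\bar D\subset H(X\otimes C)$, and since $L$ need not be closed, $X\otimes C$ need not be separated—so one cannot argue through separatedness, only reducedness is forced. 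The idea that should close the gap is to exploit radical-fullness to build a sequence that is Cauchy in the $\my$-adic topology of $L$ \emph{itself}, not merely in $C^{n}$: from $\my^{n}\tilde D=\my^{n+1}\tilde D+\my^{n}L$ one produces, for a fixed $d\in\tilde D$, elements $\ell^{(n)}\in L$ with $d-\ell^{(n)}\in\my^{n}\tilde D\subset\my^{n}C^{n}$ and $\ell^{(n+1)}-\ell^{(n)}\in\my^{n}L=\varphi(\my^{n}C^{s})$. Lifting the increments through $\varphi$ gives $c^{(n)}\in C^{s}$ with $\varphi(c^{(n)})=\ell^{(n)}$ and $c^{(n+1)}-c^{(n)}\in\my^{n}C^{s}$; completeness of $C^{s}$ yields a limit $c^{(n)}\to c$, and then $\varphi(c)=\lim\ell^{(n)}=d$, so $d\in L$. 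Hence $\bar D=0$ and $X\otimes C$ is reduced. The two points I would verify carefully are the identity $\my^{n}L=\varphi(\my^{n}C^{s})$ (immediate from $R$-linearity of $\varphi$) and that $d-\ell^{(n)}\to0$ forces $\lim\ell^{(n)}=d$ in the separated module $C^{n}$.
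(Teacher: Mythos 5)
Your proposal is correct, and it splits naturally: parts (a) and (c) follow essentially the paper's own route, while part (b) takes a genuinely more self-contained path. For (a) you apply Lemma (1.9) to $A=M$, $B=\hat{M}$ exactly as the paper does ($\hat{M}/M$ radical-full since $M+\my\hat{M}=\hat{M}$, and $H(\hat{M})=0$ makes the largeness condition vacuous). For (c) the paper lifts the pure-injective hull inclusion $M\subset N$ along the now pure-essential extension $M\subset\hat{M}$ to a monomorphism $\hat{M}\to N$ and then quotes (1.5a); you instead rerun the argument of (1.5a) with $M^{00}$ as the pure-injective target, and your justification that $\ker\beta=0$ (the image of $M$ stays pure in $\hat{M}/\ker\beta$ because a composite being a pure monomorphism forces the first factor to be one) is exactly the point that makes pure-essentiality applicable. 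This yields the slightly stronger statement $\mbox{Koatt}(\hat{M})=\mbox{Koatt}(M)$, i.e.\ the formulation of (1.3) --- same idea, different packaging.

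The real divergence is (b). The paper tensors a presentation $R^n\to R^m\to X\to 0$ with $\hat{M}$, observes that $\mbox{Kern}(\beta\otimes 1)$, being a factor module of $\hat{M}^n$ (the image of a matrix map between complete modules), is complete, and then invokes (1.4), whose relevant direction rests on Simon's lemma. You prove the needed statement --- $X\otimes_R C$ is reduced for complete $C$ --- directly: successive approximation inside $\tilde{D}=\my\tilde{D}+L$ produces $\ell^{(n)}\in L$ with $d-\ell^{(n)}\in\my^n C^n$ and increments in $\my^n L=\varphi(\my^n C^s)$; lifting the increments through $\varphi$ and summing the resulting Cauchy series in the complete module $C^s$ gives $d=\varphi(c)\in L$, using that $\my^n C^s$ is (trivially) closed and that $C^n$ is separated. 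This is in effect an inlined proof of precisely the instance of (1.4) that the paper uses: your lifting-through-$\varphi$ step is where the paper's ``image of a matrix map from a complete module is complete'' is hidden, and your limit argument replays the proof of Simon's lemma. What the paper's route buys is brevity, since (1.4) is already on record; what yours buys is independence from (1.4) --- and hence from Jensen's cotorsion characterization and Simon's lemma --- together with a transparent mechanism for why completeness of $\hat{M}$ kills radical-full submodules of $X\otimes_R\hat{M}$. Your caveat that $X\otimes_R C$ need only be reduced, not separated, is also exactly right and matches why the paper speaks of ``totalreduziert'' rather than ``totalsepariert'' here.
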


\begin{proof}
\begin{enumerate}
\item[(a)]
$\;A=M$ und $B=\hat{M}$ erf"ullen die Voraussetzungen von (1.9), denn nat"urlich ist $\hat{M}/M$ radikalvoll und sogar $H(\hat{M})=0.$
\item[(b)]
Mit einer exakten Folge $\;R^n \stackrel{\alpha}{\longrightarrow} R^m \stackrel{\beta}{\longrightarrow} X \rightarrow 0\;$ ist auch $R^n \otimes_R \hat{M} \rightarrow R^m \otimes_R \hat{M} \rightarrow X \otimes_R \hat{M} \to 0\,$ exakt, also Kern$(\beta \otimes 1) $ 
als Faktormodul von $R^n \otimes_R \hat{M}$ vollst"andig. Nach (1.4) ist deshalb $X \otimes_R \hat{M}$ reduziert, also auch der Untermodul $X \otimes_R M.$
\item[(c)]
Die rein-injektive H"ulle $M \subset N$ l"a"st sich, weil jetzt $M \subset \hat{M}$ rein-wesentlich ist, zu einem Monomorphismus $\hat{M} \to N$ hochheben, und wieder mit (1.5a) folgt Ass$(\hat{M}) \subset $ Ass$(N) \subset $ Koatt$(M).$
\end{enumerate}
\end{proof}

\begin{remark}
{\rm Ohne die Reinheit von $M$ in $\hat{M}$ gilt (b) nicht einmal f"ur $X = R/\py$:
Nach Griffith \cite[p.323]{4} gibt es "uber jedem 2-dimensionalen, abz"ahlbaren, regul"aren Ring $R$ einen flachen Untermodul $M \subset R^{({\Bbb N})}$ und ein Primideal $\py \subsetneq \my$, so da"s $M/\py M$ den Quotientenk"orper von $R/\py$ enth"alt, also nicht reduziert ist.}
\end{remark}

{\it Frage 1}\quad F"ur welche separierten $R$-Moduln $M$ gilt Ass$(\hat{M})\subset $ Koatt$(M)$?

\section{Totalseparierte Moduln}
\setcounter{theorem}{0}

Ein $R$-Modul $M$ hei"se {\it totalsepariert}, wenn $X \otimes_R M$ separiert ist f"ur jeden endlich erzeugten $R$-Modul $X$.
Nat"urlich ist jeder endlich erzeugte $R$-Modul totalsepariert, allgemeiner jeder koatomare $R$-Modul $M$, denn dann ist $\my^eM$ endlich erzeugt f"ur ein $e\geq 1$, also auch $X \otimes_R M$ koatomar, insbesondere separiert.
"Uber einem diskreten Bewertungsring $R$ ist sogar jeder separierte $R$-Modul $M$ bereits totalsepariert, denn $X$ ist von der Form
$X \cong R/\ay_1\times \cdots \times  R/\ay_n$, und weil offenbar alle $R/\ay_i \otimes_R M \cong M/\ay_i M$ separiert sind, ist es auch $X \otimes_R M.$

\begin{lemma}
F"ur einen $R$-Modul $M$ sind "aquivalent:
\begin{enumerate}
\item[(i)]
$M$ ist totalsepariert.
\item[(ii)]
Die kanonische Abbildung $M \to \prod\limits_{n\geq 1} M/\my^n M\;$ ist ein reiner Monomorphismus.
\item[(iii)]
Die rein-injektive H"ulle $N$ von $M$ ist separiert.
\end{enumerate}
\end{lemma}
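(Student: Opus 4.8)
The plan is to establish the cycle (i) $\Rightarrow$ (ii) $\Rightarrow$ (iii) $\Rightarrow$ (i). Write $M_n=M/\my^nM$. Since $R$ is noetherian, finitely generated and finitely presented coincide, so $X\otimes_R\prod_iY_i\cong\prod_i(X\otimes_RY_i)$ for finitely generated $X$; moreover $\my^n\prod_iY_i=\prod_i\my^nY_i$ because $\my^n$ is finitely generated, whence $H(\prod_iY_i)=\prod_iH(Y_i)$ and an arbitrary product of separated modules is separated. For (i) $\Rightarrow$ (ii) I would test the canonical map $\mu\colon M\to\prod_nM_n$ for purity by tensoring with an arbitrary finitely generated $X$: using the product formula, $X\otimes\mu$ is identified with the canonical map $X\otimes M\to\prod_n(X\otimes M)/\my^n(X\otimes M)$, whose kernel is $H(X\otimes M)$. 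By hypothesis $X\otimes M$ is separated, so this kernel vanishes and $X\otimes\mu$ is injective for every finitely presented $X$; hence $\mu$ is a pure monomorphism.

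The heart of the matter is (ii) $\Rightarrow$ (iii), and the key observation is that the pure-injective hull of an $\my^n$-bounded module stays $\my^n$-bounded. Recall from the proof of (1.5) that for any $A$ the canonical map $A\to A^{00}$ is a pure monomorphism into the pure-injective module $A^{00}$, so that the pure-injective hull $\mbox{PE}(A)$ is a direct summand of $A^{00}$. Applying this to $M_n$: since $\my^nM_n=0$, also $\my^nM_n^{00}=0$ (annihilators only grow under $\mbox{Hom}_R(-,E)$), so the summand $\mbox{PE}(M_n)$ is annihilated by $\my^n$ and is therefore separated. Consequently $W=\prod_n\mbox{PE}(M_n)$ is pure-injective (a product of pure-injectives) and separated (a product of separated modules).

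I would then observe that the composite $M\stackrel{\mu}{\longrightarrow}\prod_nM_n\hookrightarrow W$ is a pure monomorphism: the first factor is pure by (ii), and the second is the product of the pure inclusions $M_n\hookrightarrow\mbox{PE}(M_n)$, which is pure because tensoring with a finitely presented module commutes with the product. Thus $M$ embeds purely into the pure-injective module $W$, so its pure-injective hull $N$ is isomorphic to a direct summand of $W$; as $W$ is separated, so is $N$, which is (iii). Finally, (iii) $\Rightarrow$ (i) is read off from (1.5c): if $N$ is separated then $N=N/H(N)$ is a direct summand of a product $\prod_jA_j$ of modules of finite length, so for finitely generated $X$ the module $X\otimes N$ is a summand of $\prod_j(X\otimes A_j)$; each $X\otimes A_j$ has finite length, hence is separated, so $X\otimes N$ is separated. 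Since $M$ is pure in $N$, the map $X\otimes M\to X\otimes N$ is injective, exhibiting $X\otimes M$ as a submodule of a separated module; thus $X\otimes M$ is separated for every finitely generated $X$, i.e. $M$ is totalsepariert.

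I expect the only genuinely delicate point to be the boundedness step in (ii) $\Rightarrow$ (iii): one must know both that $\mbox{PE}(M_n)$ is a direct summand of the Matlis bidual $M_n^{00}$ and that passing to a summand cannot destroy the annihilator $\my^n$. Once this is secured, the separated pure-injective module $W=\prod_n\mbox{PE}(M_n)$ does all the work, and one avoids any direct analysis of pure-essentiality, of $H(N)$, or of the completion $\hat{M}$ via (1.6).
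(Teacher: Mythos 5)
Your proof is correct, and your steps (i)$\Rightarrow$(ii) and (iii)$\Rightarrow$(i) coincide in substance with the paper's (the same product-commutation argument for finitely generated $X$, the same use of (1.5c) plus purity of $M$ in $N$). Where you genuinely diverge is (ii)$\Rightarrow$(iii). The paper stays entirely inside the pure-injective hull $M\subset N$: it forms the commutative square relating $f\colon M\to\prod_{n\geq 1} M/\my^nM$ and $g\colon N\to\prod_{n\geq 1} N/\my^nN$, notes that $\alpha=\prod_n\alpha_n$ is a pure monomorphism because products preserve purity, and then invokes the definition of rein-wesentlich to conclude that $g$ is injective, i.e. $H(N)=\mbox{Kern}\,g=0$. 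You instead construct an explicit separated pure-injective overmodule $W=\prod_n \mbox{PE}(M/\my^nM)$, using the Matlis bidual to keep each factor $\my^n$-bounded, and then quote the standard fact that the pure-injective hull splits off from any pure-injective module containing $M$ purely. Both routes are valid; the paper's is more economical (it needs nothing beyond the definition of pure-essential and purity of products), while yours avoids any direct manipulation of pure-essentiality at the cost of two external facts about hulls. Note, however, that the point you flag as genuinely delicate --- the direct-summand property --- is stronger than what you actually need, at both places where you invoke it: any submodule of $M_n^{00}$ is killed by $\my^n$, and separatedness passes to arbitrary submodules (if $h\colon N\to W$ is injective then $h(H(N))\subset H(W)=0$), so a mere monomorphism $\mbox{PE}(M_n)\to M_n^{00}$, respectively $N\to W$, suffices. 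Such monomorphisms are exactly what pure-essentiality provides by the lifting argument the paper itself uses in the proof of (1.5a); so your argument can be run with weaker ingredients than you feared.
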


\begin{proof}
$(i \to ii)$\quad Wir m"ussen zeigen, da"s f"ur jeden endlich erzeugten $R$-Modul $X$ die kanonische Abbildung $X \otimes_R M \to X \otimes_R (\prod\limits_{n\geq 1} M/\my^n M)\;$ injektiv ist.
Nach Voraussetzung ist nun $X \otimes_R M$ separiert, d.h. die Abbildung $X \otimes_R M \to \prod\limits_{n\geq 1} (X \otimes_R M)\,\otimes_R R/\my^n\,\cong\,\prod\limits_{n\geq 1} (X \otimes_R M/\my^n M)\,$ injektiv, und weil $X \otimes_R - $ mit beliebigen Produkten vertauscht, ist das die Behauptung.

$(ii \to iii)$\quad Ein beliebiges Produkt von reinen Epimorphismen $\beta_i: B_i\to C_i$ ist wieder rein, denn f"ur jeden endlich erzeugten $R$-Modul $X$ sind alle Hom$_R(X,B_i) \to $ Hom$_R(X,C_i)$ surjektiv f"ur $i \in I$, also auch Hom$_R(X,\prod\limits_{i \in I} B_i) \to $ Hom$_R(X,\prod\limits_{i \in I} C_i).$

Ist nun $M \subset N$ eine rein-injektive H"ulle, sind alle $\alpha_n: M/\my^n M \to N/\my^n N$ reine Monomorphismen, also im kommutativen Diagramm

\vspace*{1cm}
\begin{center}
\begin{picture}(2200,700)
\setsqparms[0`1`1`1;1000`700]
\putsquare(0,0)[M`N`\prod\limits_{n\geq 1} M/\my^n M`\prod\limits_{n\geq 1} N/\my^n N;\subset`f`g`\alpha]
\end{picture}
\end{center}

\vspace{1cm}
     
auch $\alpha = \prod \alpha_n$ ein reiner Monomorphismus. Weil die kanonische Abbildung $f$ nach Voraussetzung rein ist, folgt aus der Definition von rein-wesentlich, da"s $g$ injektiv, d.h. Kern$\,g = H(N)=0$ ist.

$(iii\to i)$\quad
$N$ ist separiert, also nach (1.5c) direkter Summand von $\prod\limits_{i \in I} Y_i,$
wobei alle $Y_i$ von endlicher L"ange sind. Klar sind alle $Y_i$ totalsepariert, also auch $\prod\limits_{i \in I} Y_i$, also auch der reine Untermodul $M$. 
\end{proof}

\begin{corollary}
F"ur einen separierten $R$-Modul $M$ sind "aquivalent:
\begin{enumerate}
\item[(i)]
$M \subset \hat{M}\;$ ist eine rein-injektive H"ulle.
\item[(ii)]
$M$ ist totalsepariert und $N/M$ radikalvoll.
\end{enumerate}
\end{corollary}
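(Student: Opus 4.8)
The plan is to obtain both implications almost immediately from the structural isomorphism (1.6), combined with the characterization of total separation in (2.1). Write $M\subset N$ for a pure-injective hull and set $N_1/M=P(N/M)$, so that (1.6) supplies a separated module $N_1/H(N_1)$ together with an isomorphism $\hat{M}\cong N_1/H(N_1)$ that, by construction, extends the canonical embedding $M\to N_1/H(N_1)$.

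For $(ii)\Rightarrow(i)$ I would first note that total separation of $M$ makes $N$ separated by the implication (iii)$\Rightarrow$(i) of (2.1), so $H(N)=0$. Next, since $N/M$ is radical-full it coincides with its largest radical-full submodule, i.e. $P(N/M)=N/M$, whence $N_1=N$ and $H(N_1)=H(N)=0$. Substituting into (1.6) collapses the general formula to $\hat{M}\cong N_1/H(N_1)=N$. Because this isomorphism restricts to the inclusion $M\hookrightarrow N$, it identifies $M\subset\hat{M}$ with the pure-essential, pure-injective extension $M\subset N$; hence $M\subset\hat{M}$ is itself a pure-injective hull.

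For $(i)\Rightarrow(ii)$ I would exploit that the hypothesis lets me take $N=\hat{M}$ outright. As a completion $\hat{M}$ is separated, so (2.1), (iii)$\Rightarrow$(i), yields that $M$ is totally separated. For the remaining assertion I would use density of $M$ in $\hat{M}$: from $M+\my\hat{M}=\hat{M}$ one gets $\my\cdot(\hat{M}/M)=\hat{M}/M$, so $N/M=\hat{M}/M$ is radical-full.

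The only point that requires attention is the compatibility, in $(ii)\Rightarrow(i)$, of the abstract isomorphism $\hat{M}\cong N$ with the given copy of $M$ inside each side --- but this is exactly what (1.6) delivers, its isomorphism being the completion of the canonical map on $M$. Beyond this bookkeeping I expect no genuine obstacle: once one observes that, for separated $M$, ``$N/M$ radical-full'' is precisely the condition forcing $N_1=N$ and $H(N_1)=0$, the corollary is a direct reading of (1.6) and (2.1).
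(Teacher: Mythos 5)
Your proof is correct and takes essentially the same route as the paper: both directions rest on (1.6) and (2.1) exactly as in the paper's proof, with the same reduction $N_1=N$, $H(N_1)=H(N)=0$ for $(ii)\Rightarrow(i)$, and the same density and purity observations (via the separatedness of $\hat{M}$) for $(i)\Rightarrow(ii)$. One cosmetic slip: in $(ii)\Rightarrow(i)$ the implication of (2.1) you actually invoke is $(i)\Rightarrow(iii)$ (total separation of $M$ forces $H(N)=0$), not $(iii)\Rightarrow(i)$ as you labeled it --- harmless, since (2.1) is an equivalence.
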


\begin{proof}
$(i \to ii)$\quad Stets ist $\hat{M}/M$ radikalvoll, und weil $\hat{M}$ rein-injektiv und separiert, also nach (1.5c) totalsepariert ist, ist das auch der nach Voraussetzung reine Untermodul $M$.

$(ii \to i)$\quad
Aus der zweiten Bedingung folgt nach (1.6) $\hat{M} \cong N/H(N)$, aus der ersten nach (2.1) $H(N)=0.$
\end{proof}

\begin{remark}
{\rm F"ur eine rein-wesentliche Erweiterung $A \subset B$ folgt unmittelbar aus der Definition, da"s jeder endlich erzeugte reine Untermodul von $B/A$ bereits Null ist.
Speziell "uber einem diskreten Bewertungsring $R$ sieht man mit Hilfe eines Basis-Untermoduls, da"s dann $B/A$ sogar radikalvoll ist,
und weil jeder separierte $R$-Modul nach der Einleitung bereits totalsepariert ist, erh"alt man:}

"Uber einem diskreten Bewertungsring $R$ gilt f"ur jeden separierten $R$-Modul $M$, da"s $M \subset \hat{M}$ eine rein-injektive H"ulle ist.
\end{remark}

\begin{theorem}
Sei $M$ separiert und $\hat{M}$ als $R$-Modul flach. Dann sind "aquivalent:
\begin{enumerate}
\item[(i)]
$M$ ist totalsepariert.
\item[(ii)]
F"ur jedes Primideal $\py$ von $R$ ist $M/\py M$ separiert.
\item[(iii)]
$M$ ist rein in $\hat{M}.$

Falls $R$ vollst"andig, ist das weiter "aquivalent mit
\item[(iv)]
$M$ ist ein Mittag-Leffler-Modul.
\end{enumerate}
\end{theorem}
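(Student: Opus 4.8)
The plan is to prove the equivalence of (i), (ii), (iii) by the cycle $(i)\Rightarrow(ii)\Rightarrow(iii)\Rightarrow(i)$, and then to treat (iv) separately, using that the chain ``Mittag-Leffler $\Rightarrow$ totalsepariert $\Rightarrow$ rein'' is already recorded in the introduction. The implication $(i)\Rightarrow(ii)$ is immediate: taking $X=R/\py$ in the definition of totalsepariert gives that $M/\py M$ is separated.

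For $(ii)\Rightarrow(iii)$ I would use the flatness of $\hat M$ to turn purity into a Tor-vanishing statement. Tensoring $0\to M\to\hat M\to\hat M/M\to 0$ with $R/\py$ and using $\mbox{Tor}_1^R(R/\py,\hat M)=0$ gives $\mbox{Tor}_1^R(R/\py,\hat M/M)\cong\ker(M/\py M\to\hat M/\py\hat M)$. Now the canonical map $M/\py M\to\hat M/\py\hat M$ factors the completion map $M/\py M\to\widehat{M/\py M}$ (the second factor being the natural map $\hat M/\py\hat M\to\widehat{M/\py M}$ induced by the surjections $M/\my^nM\to M/(\py+\my^n)M$), so its kernel lies in $\bigcap_n\my^n(M/\py M)$, which is $0$ by (ii). Hence $\mbox{Tor}_1^R(R/\py,\hat M/M)=0$ for every prime $\py$, and by the standard prime-filtration argument this forces $\hat M/M$ to be flat. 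Flatness of $\hat M/M$ then makes $X\otimes_R M\to X\otimes_R\hat M$ injective for every finitely generated $X$, i.e. $M$ is rein in $\hat M$.

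For $(iii)\Rightarrow(i)$ I would exploit the rein-injektive H\"ulle $M\subset N$ from (1.8), where $N/M$ is radikalvoll and $\hat M\cong N/H(N)$. Since $M\subset N$ is rein, $M\cap\my^nN=\my^nM$, whence $M\cap H(N)\subset\bigcap_n\my^nM=0$. By (1.6) the isomorphism $N/H(N)\cong\hat M$ carries $(M+H(N))/H(N)$ onto the canonical image of $M$, so hypothesis (iii) says exactly that $(M+H(N))/H(N)$ is rein in $N/H(N)$. As the rein-injektive H\"ulle is rein-wesentlich, applying the definition of rein-wesentlich with the submodule $H(N)$ (and using $M\cap H(N)=0$) yields $H(N)=0$; thus $N$ is separated, and by (2.1) $M$ is totalsepariert, i.e. (i).

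It remains to bring in (iv) when $R$ is complete. The direction $(iv)\Rightarrow(i)$ is the implication recorded in the introduction. For the converse I would first note that, under (iii), flatness of $\hat M$ together with flatness of $\hat M/M$ forces $M$ itself to be flat, so the task reduces to showing that a flat totalseparierter Modul over a complete local ring is Mittag-Leffler. I expect this to be the main obstacle, since it seems to need genuine Mittag-Leffler theory rather than the homological bookkeeping above. The natural route is to use that over the complete ring $R$ the flat, kotorsion (hence by Jensen rein-injektive) module $\hat M$ has a very restricted structure — a product of completed free modules over the localizations $R_\py$ — and to deduce the Mittag-Leffler property of its pure submodule $M$ from this, for instance by realizing $M$ as a pure submodule of a product of copies of $R$ (which is a Mittag-Leffler module) and invoking the stability of the Mittag-Leffler property under pure submodules. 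Verifying that this structure really yields the Mittag-Leffler property — equivalently, applying the Raynaud--Gruson characterization of flat Mittag-Leffler modules — is the crux, and it is here that completeness of $R$ is essential.
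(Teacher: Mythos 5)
Your handling of the equivalence (i)$\Leftrightarrow$(ii)$\Leftrightarrow$(iii) is correct. The steps (i)$\to$(ii) and (ii)$\to$(iii) coincide in substance with the paper's own proof: the paper likewise reduces purity to $\mbox{Tor}_1^R(R/\py,\hat{M}/M)=0$, kills the kernel of $M/\py M\to\hat{M}/\py\hat{M}$ using separatedness of $M/\py M$ (embedding $M$ into $\prod_{n\geq 1}M/\my^nM$ rather than factoring through $\widehat{M/\py M}$ --- an immaterial variation), and then passes to all finitely generated $X$ by prime filtration to get $\hat{M}/M$ flat. Your (iii)$\to$(i) is genuinely different: the paper simply notes that $\hat{M}$, being complete and flat, is pure-injective by Jensen and hence totally separated by (1.5c), so the pure submodule $M$ inherits this; you instead invoke (1.8) to identify $\hat{M}\cong N/H(N)$ compatibly with the embeddings of $M$, check $M\cap H(N)=0$ from purity and separatedness, and then apply the definition of pure-essentiality with $X=H(N)$ to force $H(N)=0$, concluding via (2.1). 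That argument is valid (it is the same use of pure-essentiality as in the proof of (2.1), and there is no circularity since (1.8) is independent of Section 2); the paper's version is shorter and needs only Jensen and (1.5c).

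The genuine gaps are in part (iv), in both directions. First, you do not prove (iv)$\to$(i) at all: the chain ``Mittag-Leffler $\Rightarrow$ totalsepariert $\Rightarrow$ rein'' that you quote from the introduction is precisely what Theorem 2.4 establishes, so citing it is circular. The paper proves it inside this very proof (and without completeness of $R$): by \cite[2.2.1]{7} every $x\in H(M)$ lies in a pure-projective submodule $U$ that is pure in $M$, whence $x\in U\cap H(M)=H(U)=0$, so $M$ is separated; and for finitely generated $X$ the module $X\otimes_RM$ is again Mittag-Leffler, by a tensor diagram using that $X\otimes_R\prod Q_i\to\prod(X\otimes_RQ_i)$ is bijective for finitely presented $X$; hence every $X\otimes_RM$ is separated, i.e.\ $M$ is totally separated. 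Second, for the converse your reduction to ``$M$ is flat'' (from flatness of $\hat{M}$ and of $\hat{M}/M$) is exactly the paper's first sentence, but you then concede you cannot finish. The paper finishes in one stroke by citing \cite[2.5.3]{7}: over the complete noetherian local ring $R$, the flat separated module $M$ is Mittag-Leffler. Your proposed substitute --- realize $M$ as a pure submodule of a product of copies of $R$, assert that such products are Mittag-Leffler, and use descent to pure submodules --- does not circumvent this: that $R^J$ is Mittag-Leffler over a complete local ring is not an elementary fact but is of the same depth as (indeed a special case of) the very criterion you are missing, and the pure embedding of $M$ into such a product is also left unjustified. So the (iv)-part of the theorem remains unproven in your proposal.
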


\begin{proof}
$(i \to ii)$\quad
ist klar, weil $X \otimes_R M$ speziell f"ur $X=R/\py$ separiert ist.

$(ii \to iii)$\quad F"ur jedes Ideal $\ay$ von $R$ gilt: Ist $M/\ay M$ separiert, folgt $M \cap \ay \hat{M} = \ay M$, denn mit $P=\prod\limits_{n\geq 1} M/\my^n M$ ist die kanonische Abbildung $M/\ay M \to \prod\limits_{n\geq 1} (M/\ay M \otimes_R R/\my^n)\,\cong P/\ay P\;$ injektiv, also sogar $M \cap \ay P = \ay M.$

Unter unseren Voraussetzungen ist deshalb Tor$_1^R(R/\py,\hat{M}/M)=0$ f"ur 
jedes Primideal $\py$, also sogar Tor$_1^R(X,\hat{M}/M)=0$ f"ur jeden endlich erzeugten $R$-Modul $X$, und das bedeutet, da"s $\hat{M}/M$ flach, also $M$ rein in $\hat{M}$ ist.

$(iii\to i)\quad$ Jeder vollst"andige flache $R$-Modul ist wieder nach Jensen \cite[Proposition 4]{5} rein-injektiv, insbesondere nach (1.5c) totalsepariert. Mit $\hat{M}$ ist also auch der nach Voraussetzung reine Untermodul $M$ totalsepariert.

$(iv \to i)$\quad gilt auch ohne die Vollst"andigkeit von $R$. Ein $R$-Modul $M$ hei"st nach Raynaud und Gruson \cite[2.1.5]{7} {\it Mittag-Leffler-Modul},
wenn f"ur jede Familie $(Q_i|i \in I)$ von $R$-Moduln die kanonische Abbildung $M \otimes_R (\prod Q_i) \to \prod (M \otimes_R Q_i)\,$ injektiv ist.
$M$ ist dann separiert, denn zu jedem $ x\in H(M)$ gibt es nach \cite[2.2.1]{7} einen rein-projektiven
Untermodul $U$ von $M$, so da"s $x \in U$ und $U$ rein in $M$ ist:
Aus dem ersten folgt $H(U)=0$, aus dem zweiten $x \in U \cap H(M)=H(U),$ also $x=0$.

$M$ ist sogar totalsepariert, denn f"ur jeden endlich erzeugten $R$-Modul $X$ ist $X \otimes_R M$ wieder ein Mittag-Leffler-Modul: Im kommutativen Diagramm

\vspace*{2cm}

\begin{center}
\begin{picture}(2200,700)
\settriparms[1`0`1;500]
\putCtriangle(0,0)[`M\otimes_R (X \otimes_R \prod Q_i)`;\cong``\beta\;\cong]
\setsqparms[1`0`1`1;1500`1000]
\putsquare(900,0)[(X \otimes_R M) \otimes_R(\prod Q_i)`\prod ((X \otimes_R M)\otimes_R Q_i)`M \otimes_R (\prod (X \otimes_R Q_i))`\prod (M \otimes_R (X \otimes_R Q_i));\gamma``\cong`\alpha]
\end{picture}
\end{center}

\vspace{1cm}
     
ist nach Voraussetzung $\alpha$ injektiv, ebenso $\beta$ wegen $X$ endlich erzeugt, also auch $\gamma$ wie behauptet.

$(ii \to iv)$\quad Nach dem bereits Bewiesenen ist auch $M$ flach, also $M$ nach \cite[2.5.3]{7} wegen der Vollst"andigkeit von $R$ ein Mittag-Leffler-Modul.
\end{proof}

\begin{corollary}
Ist $\dim (R) \leq 1$, so ist jeder separierte flache $R$-Modul bereits totalsepariert.
\end{corollary}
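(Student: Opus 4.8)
The plan is to deduce the statement from Theorem~2.4. Since $M$ is flat, its $\my$-adic completion $\hat M$ is again flat over $R$ (the completion of a flat module over a noetherian ring is flat; if one prefers, $\hat M$ is flat over $\hat R$ and $\hat R$ is flat over $R$). Thus $M$ is separated with $\hat M$ flat, and by (2.4) it suffices to verify condition (iii), that $M$ is pure in $\hat M$. As $M\hookrightarrow\hat M$ with both $M$ and $\hat M$ flat, the quotient $\hat M/M$ has flat dimension at most $1$; hence $M$ is pure in $\hat M$ if and only if $\hat M/M$ is flat, that is, if and only if $\mathrm{Tor}_1^R(R/\py,\hat M/M)=0$ for every prime $\py$ of $R$.

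First I would identify this $\mathrm{Tor}$-module. From $0\to M\to\hat M\to\hat M/M\to0$ and the flatness of $\hat M$ one obtains, using that $M\to\hat M$ is injective, $\mathrm{Tor}_1^R(R/\py,\hat M/M)\cong\ker\!\big(M/\py M\to\hat M/\py\hat M\big)=(M\cap\py\hat M)/\py M$; in particular it is a submodule of $M/\py M$. Because $M$ is flat, $M/\py M$ is a flat, hence torsion-free, module over the domain $R/\py$, so $\mathrm{Tor}_1^R(R/\py,\hat M/M)$ is a torsion-free $R/\py$-module. For $\py=\my$ one is then done at once: under the canonical isomorphism $\hat M/\my\hat M\cong M/\my M$ the map $M/\my M\to\hat M/\my\hat M$ is the identity, so its kernel vanishes.

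The heart of the matter is a minimal prime $\py$, which (for $\dim R\le1$) is the only remaining case. Here $R_\py$ is a zero-dimensional, i.e. artinian, local ring. Localising the length-one flat resolution $0\to M\to\hat M\to\hat M/M\to0$ at $\py$ shows that $(\hat M/M)_\py$ has finite flat dimension over $R_\py$; but over an artinian local ring a module of finite flat dimension is already flat (Auslander--Buchsbaum/Chouinard: the flat dimension equals $\mathrm{depth}\,R_\py-\mathrm{width}\,(\hat M/M)_\py\le0$, hence is $0$). Consequently $\mathrm{Tor}_1^R(R/\py,\hat M/M)_\py=\mathrm{Tor}_1^{R_\py}(\kappa(\py),(\hat M/M)_\py)=0$, and a torsion-free $R/\py$-module vanishing at the generic point of $R/\py$ is zero. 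Thus $\mathrm{Tor}_1^R(R/\py,\hat M/M)=0$ for these $\py$ as well, $\hat M/M$ is flat, $M$ is pure in $\hat M$, and $M$ is totally separated by (2.4).

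I expect the main obstacle, and the only place where $\dim R\le1$ is genuinely used, to be this minimal-prime step: it is precisely the passage to the artinian localisation $R_\py$ that forces the flat-dimension-$\le1$ module $(\hat M/M)_\py$ to be flat. In higher dimension the argument must break down — for $\mathrm{ht}\,\py=1$ the ring $R_\py$ is a one-dimensional local domain, over which finite flat dimension no longer implies flatness — consistently with Griffith's example in (1.11), where over a $2$-dimensional regular ring a flat $M$ fails even to be totalreduziert. A secondary point to pin down carefully is the preliminary claim that $\hat M$ is flat; I would either cite the standard result for noetherian rings or reduce it to the flatness of $\hat M$ over $\hat R$ together with the faithful flatness of $\hat R$ over $R$.
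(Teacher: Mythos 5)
Your proof is correct, but it routes through Satz 2.4 differently than the paper does. You verify condition (iii), purity of $M$ in $\hat M$: after reducing purity to the vanishing of $\mathrm{Tor}_1^R(R/\py,\hat M/M)$ for all primes $\py$ (legitimate, since $\hat M$ is flat), you identify this Tor-module with the torsion-free $R/\py$-module $(M\cap\py\hat M)/\py M\subset M/\py M$, settle $\py=\my$ by the split injection $M/\my M\to\hat M/\my\hat M$, and for minimal $\py$ you localize and invoke Chouinard's formula -- finite flat dimension over the artinian local ring $R_\py$ forces flatness -- to get vanishing at the generic point, hence vanishing by torsion-freeness. The paper instead verifies condition (ii): $M/\my M$ is trivially separated, and for $\py\neq\my$ the hypothesis $\dim(R)\leq 1$ makes $\py$ a minimal prime, hence $\py\in\mathrm{Ass}(R)$, so $R/\py$ embeds in $R$; tensoring this embedding with the flat module $M$ embeds $M/\py M$ into the separated module $M$, so $M/\py M$ is separated. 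Both arguments use $\dim(R)\leq 1$ only through the fact that every non-maximal prime is minimal; the paper exploits minimality via $\mathrm{Ass}(R)$, you via the artinian localization. The paper's verification is two elementary lines, while yours needs Chouinard's theorem -- though that step can be made elementary: over a local ring of depth $0$ there is an embedding $k\hookrightarrow R$ with cokernel $C$, the exact sequence $\mathrm{Tor}_2^R(C,N)\to\mathrm{Tor}_1^R(k,N)\to\mathrm{Tor}_1^R(R,N)=0$ kills $\mathrm{Tor}_1^R(k,N)$ when $N$ has flat dimension at most $1$, and d\'evissage over an artinian ring then yields flatness. On the credit side, you make explicit a point the paper passes over in silence: Satz 2.4 presupposes that $\hat M$ is flat, so one must know that the completion of a flat module over a noetherian local ring is again flat, which you address; the paper's proof tacitly relies on the same fact.
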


\begin{proof}
Mit Punkt $(ii)$ des Satzes: Nat"urlich ist $M/\my M$ separiert, und bei $\py \neq \my$ l"a"st sich $R/\py$ in $R$ einbetten, also auch $M/\py M$ in $M$, so da"s $M/\py M$ separiert ist.
\end{proof}

\begin{remark}
{\rm Die Implikation $(i \to iv)$ im Satz gilt nicht mehr, sobald $R$ unvollst"andig ist: Aus der rein-exakten Folge $0 \to R \to \hat{R} \to \hat{R}/R \to 0\,$ folgt mit $C=\hat{R}/R$, da"s $0 \to \hat{R} \to \hat{R} \otimes_R \hat{R} \to \hat{R} \otimes_R C \to 0\,$ zerf"allt und $\hat{R}\otimes_R C$ radikalvoll $\neq 0$ ist, also $\hat{R} \otimes_R \hat{R}$ einen radikalvollen direkten Summanden $\neq 0$ besitzt.
Damit ist $\hat{R} \otimes_R \hat{R}$ kein Mittag-Leffler-Modul, also auch nicht $M=\hat{R}$, obwohl $M$ totalsepariert ist.}
\end{remark}

\begin{remark}
{\rm Die Implikation $(iii \to i)$ im Satz gilt auch dann, wenn $M$ separiert und $M/\my M$ endlich erzeugt ist:
Weil dann $M/\my M \to M^{00}/\my \cdot M^{00}$, also auch $M/\my M \to N/\my N$ ein Isomorphismus ist, ist $N/M$ radikalvoll und nach (1.6) $\hat{M} \cong N/H(N)$, also $\hat{M}$ nach (1.5c) totalsepariert. Wie in (2.4, $iii \to i$) folgt die Behauptung.}
\end{remark}

Der in (1.11) angegebene Modul aus \cite{4} ist ein erstes Beispiel daf"ur, da"s ein separierter flacher $R$-Modul $A$ nicht rein in $\hat{A}$ sein mu"s.
Wir wollen jetzt eine ganze Reihe von solchen Beispielen angeben, die auf folgendem Prinzip beruhen: Ist $M$ separiert und $M \subset A \subset \hat{M}$ ein Zwischenmodul, so folgt aus dem kommutativen Diagramm

\vspace*{1cm}

\begin{center}
\begin{picture}(2200,700)
\settriparms[1`1`-1;700]
\putVtriangle(0,0)[M/\my^n M`\hat{M} /\my^n \hat{M}`A/\my^n A;\cong``]
\end{picture}
\end{center}

\vspace{1cm}
     
da"s $A$ genau dann ein dichter Unterraum von $\hat{M}$, d.h. $\hat{A} = \hat{M}$ ist, wenn $A/M$ radikalvoll ist.
Aber $A$ mu"s keineswegs rein in $\hat{M}$ sein:

\begin{beispiel}
Sei $R$ ein unvollst"andiger Integrit"atsring mit $\dim(R)\geq 2$, so da"s $R/\ay$ vollst"andig ist f"ur alle Ideale $\ay \neq 0.$
Dann gibt es einen separierten flachen $R$-Modul $A$, so da"s $A/\my A$ einfach ist, aber $A$ {\rm nicht} rein in $\hat{A}.$
\end{beispiel}

\begin{proof}
Nach Rotthaus \cite[1.4]{8} gibt es zu jedem $n\geq 2$ einen Integrit"atsring $R$ mit $\dim (R)=n$ und der gew"unschten Eigenschaft. Nach \cite[Beispiel 2.4]{12} ist dann Koass$_R(\hat{R}) =\{0,\my\},$ also Koass$_R(\hat{R}/R)=\{0\},$
d.h. $\hat{R}/R \cong K^{(I)}$ mit $K=$ Quot$(R)$ und $I \neq \emptyset.$
Mit irgendeinem $0 \neq \py \subsetneq \my$ ist dann $X=R_{\py}$ ein flacher, radikalvoller Untermodul von $K$, aber nicht rein in $K$.
W"ahlt man Zwischenmoduln $R \subset A \subset B \subset \hat{R}$ mit $B/R \cong K,\;\,A/R \cong X,\,$ so ist $A$ separiert und flach, $A/\my A$ einfach, $A$ nicht rein in $\hat{R} =\hat{A}.$
\end{proof}

\begin{remark}
{\rm Ist $M$ wie in der Voraussetzung zu (2.4) separiert und $\hat{M}$ als $R$-Modul flach, mu"s $M$ selbst nicht flach sein (so da"s insbesondere $M$ nicht rein in $\hat{M}$ ist).
Ein Beispiel dazu geben Bartijn und Strooker in \cite[Example 3.11a]{1}, weitere folgen mit unserer Konstruktion in (2.8):
Ist $R$ ein Integrit"atsring mit $\dim (R)\geq 3$, so gibt es einen Untermodul $X$ von $K$, der radikalvoll, aber nicht flach ist.
Ist zus"atzlich Koass$_R(\hat{R}) = \{0,\my\}$ und $R \subset A \subset B \subset \hat{R}$ wie in (2.8), wird $\hat{A} =\hat{R}$ flach, obwohl $A/\my A$ einfach und $A$ {\rm nicht} flach ist. }
\end{remark}

In unseren Beweisen wurde mehrfach ben"utzt, da"s ein vollst"andiger, flacher $R$-Modul rein-injektiv, also totalsepariert ist.

{\it Frage 2} \quad Ist jeder vollst"andige $R$-Modul $M$ totalsepariert?

\section{Direkte Summen von koatomaren Moduln}

\setcounter{theorem}{0}

F"ur jeden koatomaren $R$-Modul $M$ gilt Ass$(\hat{M}) = $ Koatt$(M)$: Weil $M$ totalsepariert, also rein in $\hat{M}$ ist, gilt Ass$(\hat{M}) \subset $ Koatt$(M)$, und wegen Koatt$(M) = $ Ass$(M)$ folgt die Behauptung. Wir wollen
mit einigem Aufwand dieselbe Formel f"ur beliebige direkte Summen von koatomaren $R$-Moduln beweisen.

\begin{lemma}
Sei $M=\coprod\limits_{i \in I} M_i$ separiert und seien alle $M_i$ rein in $\hat{M}_i.$
Dann ist auch $M$ rein in $\hat{M}$.
\end{lemma}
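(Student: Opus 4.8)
The plan is to establish purity through its finitary characterisation, threading the auxiliary module $C=\coprod_{i\in I}\hat M_i$ between $M$ and $\hat M$. First I would note that each $M_i$, being a direct summand of the separated module $M$, is itself separated, so the inclusions $M_i\subset\hat M_i$ are meaningful. Completing the summand inclusions $M_i\to M$ yields split monomorphisms $\hat M_i\to\hat M$; testing against the projections $\hat M\to\hat M_j$ (the composite $\hat M_i\to\hat M\to\hat M_j$ is the completion of $M_i\to M\to M_j$, hence $0$ for $i\neq j$ and the identity for $i=j$) shows that $\coprod_i\hat M_i\to\hat M$ is injective and compatible with $M\hookrightarrow\hat M$, so we may regard $M\subset C\subset\hat M$.

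That $M$ is pure in $C$ is immediate: each sequence $0\to M_i\to\hat M_i\to\hat M_i/M_i\to 0$ is pure by hypothesis, and a direct sum of pure-exact sequences is again pure-exact because $X\otimes_R-$ commutes with $\coprod$. Hence $0\to M\to C\to\coprod_i(\hat M_i/M_i)\to 0$ is pure.

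The crux is that $C$ is pure in $\hat M$, and here the \emph{finitary} nature of purity is decisive. For finitely generated $X$, every element of $X\otimes_R C=\coprod_i(X\otimes_R\hat M_i)$ is supported on a finite set $F\subset I$. Writing $M=M_F\oplus M_{F^c}$ with $M_F=\coprod_{i\in F}M_i$ and completing this two-block decomposition gives $\hat M=\widehat{M_F}\oplus\widehat{M_{F^c}}$ with $\widehat{M_F}=\coprod_{i\in F}\hat M_i=:C_F$; thus $C_F$ is a direct summand of $\hat M$, split by an $R$-linear projection $\pi_F$ that is the identity on $C_F$. Consequently $X\otimes_R C_F\to X\otimes_R\hat M$ is a split monomorphism, and since every element of $X\otimes_R C$ lies in some $X\otimes_R C_F$, the map $X\otimes_R C\to X\otimes_R\hat M$ is injective. (Equivalently, in the equational form of purity: the finitely many constants of any linear system over $C$ lie in some $C_F$, and $\pi_F$ carries a solution in $\hat M$ to one in $C_F\subset C$.)

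Since purity is transitive, $M$ pure in $C$ together with $C$ pure in $\hat M$ yields $M$ pure in $\hat M$, as asserted. The one real obstacle is that completion does \emph{not} commute with the infinite direct sum, so $C$ is in general only a dense proper submodule of $\hat M$ rather than all of it; this is defused precisely because purity inspects only finitely many elements at a time, which permits the passage to the finite sub-sum $C_F$, for which completion splits $C_F$ off as a direct summand of $\hat M$ and supplies the projection $\pi_F$.
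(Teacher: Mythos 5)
Your proof is correct, and its skeleton is the same as the paper's: both interpose $C=\coprod_i \hat M_i$ between $M$ and $\hat M$, prove that $M$ is pure in $C$ (a direct sum of pure embeddings is pure) and that $C$ is pure in $\hat M$, and finish by transitivity of purity. The genuine difference is in the middle purity. The paper invokes Simon's description \cite{9} of $\hat M$ as the submodule of $\prod_i \hat M_i$ consisting of those families $(x_i)$ for which, for every $n\geq 1$, almost all $x_i$ lie in $\my^n\hat M_i$; since $\coprod_i\hat M_i$ is always pure in $\prod_i\hat M_i$, it is a fortiori pure in the intermediate module $\hat M$. You never identify $\hat M$ at all: instead you use that completion is functorial and commutes with finite direct sums, so each finite partial sum $C_F=\coprod_{i\in F}\hat M_i=\widehat{M_F}$ is a direct summand of $\hat M$, and the finitary nature of purity reduces the injectivity of $X\otimes_R C\to X\otimes_R\hat M$ to that of the split monomorphisms $X\otimes_R C_F\to X\otimes_R\hat M$. (You do use implicitly that the summand embedding $\widehat{M_F}\to\hat M$ obtained by completing $M=M_F\oplus M_{F^c}$ restricts, on each $\hat M_i$ with $i\in F$, to your embedding $\hat M_i\to\hat M$; this is a routine naturality check, but it is the point where your two descriptions of $C_F$ inside $\hat M$ have to be matched.) As for what each approach buys: yours is self-contained and elementary, requiring no structural description of the completion of a direct sum; the paper's is shorter given \cite{9}, and quoting the description $(\ast)$ is no extravagance in context, since that explicit characterization of the elements of $\hat M$ is needed again in (3.2) and (3.3), where one must verify that concretely constructed elements of $\prod_i\hat M_i$ actually lie in $\hat M$. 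Mechanically the two arguments are close relatives: the standard proof that a coproduct is pure in the corresponding product is exactly your finite-support-and-projection argument, with the product projection onto $\prod_{i\in F}\hat M_i$ playing the role of your $\pi_F$.
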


\begin{proof}
Nach Simon \cite[p.244]{9} kann man $\hat{M}$ als Modul zwischen $\coprod \hat{M}_i$ und $\prod \hat{M}_i$ auffassen, n"amlich
$$(\ast)\qquad \hat{M}\;=\;\{ (x_i) \in \prod \hat{M}_i|\mbox{ F"ur jedes } n\geq 1 \mbox{ gilt: Fast alle } x_i \in \my^n \cdot \hat{M}_i\}.$$
Stets ist $\coprod \hat{M}_i$ rein in $\prod \hat{M}_i$, also erst recht in $\hat{M}$, und weil nach Voraussetzung alle $M_i$ rein in $\hat{M}_i$, also auch $\coprod M_i$ rein in $\coprod \hat{M}_i$ ist, folgt die Behauptung.
\end{proof} 

\begin{corollary}
Ist in $M=\coprod M_i$ jedes $M_i$ vollst"andig und Untermodul eines flachen $R$-Moduls, gilt Ass$(\hat{M})=$ Koatt$(M)$.
\end{corollary}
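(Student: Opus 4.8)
The plan is to prove the two inclusions separately. The inclusion Ass$(\hat{M})\subset$ Koatt$(M)$ will follow directly from the machinery already assembled in Sections~1 and~3, while the reverse inclusion Koatt$(M)\subset$ Ass$(\hat{M})$ rests on one extra observation, namely that Ass$(\hat{M})$ is finite.

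First I would check that $M=\coprod M_i$ is separated. A complete module is in particular separated (Jensen, as recalled in the proof of (1.4)), so $H(M_i)=0$ for every $i$; since an element of $\coprod M_i$ has finite support, one gets $H(\coprod M_i)=\coprod H(M_i)=0$. Moreover, because each $M_i$ is complete the canonical map $M_i\to\hat{M}_i$ is an isomorphism, so $M_i$ is trivially pure in $\hat{M}_i$. Thus the hypotheses of (3.1) are met and $M$ is pure in $\hat{M}$. Feeding this into (1.10c) yields Ass$(\hat{M})\subset$ Koatt$(M)$, which settles the first inclusion.

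For the reverse inclusion the key point is that Ass$(\hat{M})$ is finite, and here the hypothesis that each $M_i$ is a submodule of a flat module $F_i$ enters. Since $R$ is noetherian, hence coherent, the product $\prod F_i$ is again flat, and Simon's description of the completion (as used in the proof of (3.1)) places $\hat{M}$ between $\coprod M_i$ and $\prod M_i=\prod\hat{M}_i$, so that $\hat{M}\subset\prod M_i\subset\prod F_i$. Thus $\hat{M}$ embeds into the flat module $\prod F_i$; recalling that a flat module over a noetherian ring has its associated primes among those of $R$ (the fact invoked in the third step of the proof of (1.8c)), we obtain Ass$(\hat{M})\subset$ Ass$(\prod F_i)\subset$ Ass$(R)$, which is finite. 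Now take $\py\in$ Koatt$(M)$. Since $M\subset\hat{M}$, monotonicity of Koatt gives $\py\in$ Koatt$(\hat{M})$, and as $\hat{M}$ is complete, (1.1) writes $\py=\bigcap_\lambda\py_\lambda$ with all $\py_\lambda\in$ Ass$(\hat{M})$. Because that set is finite, $\py$ is the intersection of finitely many associated primes, and a prime equal to a finite intersection of primes must coincide with one of them; hence $\py\in$ Ass$(\hat{M})$.

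I expect the only genuine obstacle to be the finiteness of Ass$(\hat{M})$. The decisive move is to realize $\hat{M}$ inside the ambient flat module $\prod F_i$, which requires combining Simon's realization of $\hat{M}$ in $\prod\hat{M}_i$ with the fact that products of flat modules stay flat over a coherent ring; after that the bound Ass$(\hat{M})\subset$ Ass$(R)$ is immediate and the intersection argument of (1.1) closes the gap. It is worth stressing that this route deliberately sidesteps any attempt to show that $\hat{M}$ is itself flat (which need not hold), so Theorem~(1.8) is not available and the finiteness of the associated primes must instead be harvested from the surrounding flat module.
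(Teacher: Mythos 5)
Your proof is correct and takes essentially the same route as the paper: one inclusion via (3.1) and (1.10c) (each complete $M_i$ equals $\hat{M}_i$, hence is trivially pure in it), the other by using $(\ast)$ to embed $\hat{M}$ into the flat module $\prod F_i$, so that Ass$(\hat{M})\subset$ Ass$(R)$ is finite, and then closing with the intersection argument of (1.1). The only difference is that you spell out what the paper leaves implicit, namely the separatedness of $\coprod M_i$ and the finite-intersection step, which the paper delegates to the third step of the proof of (1.7).
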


\begin{proof}
Aus $M$ rein in $\hat{M}$ folgt Ass$(\hat{M}) \subset $ Koatt$(M)$, und weil jedes $M_i$, also auch $\prod M_i$, also nach $(\ast)$ auch $\hat{M}$ Untermodul eines flachen $R$-Moduls ist, ist Ass$(\hat{M}) \subset $ Ass$(R)$ endlich und es folgt Gleichheit.
\end{proof}

\begin{proposition}
Sei $R$ ein vollst"andiger Integrit"atsring und sei $M= \coprod\limits_{i=1}^\infty R/\ay_i$ ein treuer $R$-Modul. Dann folgt $0 \in$ Ass$(\hat{M}).$
\end{proposition}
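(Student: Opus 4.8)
The plan is to exhibit a single element of $\hat{M}$ whose annihilator is $0$; since $R$ is a domain and $0$ is prime, this is exactly $0\in$ Ass$(\hat{M})$. First I record the shape of $\hat{M}$. Each $M_i=R/\ay_i$ is finitely generated over the complete ring $R$, hence already complete, so $\hat{M}_i=M_i$, and the description $(\ast)$ from (3.1) gives
\[
\hat{M}=\{(x_i)\in\textstyle\prod_i R/\ay_i \mid \mbox{for every } n\geq1,\ x_i\in\my^n(R/\ay_i)\ \mbox{for almost all }i\}.
\]
Writing $x_i=\bar{c_i}$ with $c_i\in R$, one has Ann$_R(x_i)=(\ay_i:c_i)$ and Ann$_R((x_i))=\bigcap_i(\ay_i:c_i)$. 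Thus everything reduces to: choose $c_i\in R$ with $c_i\to0$ $\my$-adically (say $c_i\in\my^{n_i}$, $n_i\to\infty$, which already forces $(x_i)\in\hat{M}$) and with $\bigcap_i(\ay_i:c_i)=0$. Conceptually, faithfulness gives the diagonal embedding $R\hookrightarrow\prod_i R/\ay_i,\ r\mapsto(\bar r)_i$ (injective because $\bigcap_i\ay_i=0$); multiplying the $i$-th coordinate by $c_i$ pushes this map into $\hat{M}$, and the task is to keep it injective. If some $\ay_i=0$ the claim is immediate (take that coordinate a nonzero element of $\my^n$ and all others $0$), so I may assume all $\ay_i\neq0$, i.e. $M$ is torsion and the required element must have infinite support.

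The per-index choice I would make uses prime avoidance. Split $\ay_i=\ay_i'\cap\qy_i$ into the intersection $\ay_i'$ of the primary components with radical $\neq\my$ and the $\my$-primary component $\qy_i$ (with $\ay_i'=R$, resp.\ $\qy_i=R$, if no such component occurs). Since $\my\notin$ Ass$(R/\ay_i')$, I can pick $c_i\in\my^{n_i}$ that is a nonzerodivisor modulo $\ay_i'$ and (whenever possible) also $c_i\notin\qy_i$; then $(\ay_i':c_i)=\ay_i'$ and $(\ay_i:c_i)=\ay_i'\cap(\qy_i:c_i)$. There is a genuine dichotomy, forced by the depth of $\qy_i$ against the order $n_i$: if $\qy_i\supseteq\my^{n_i}$ I am compelled to take $c_i\in\qy_i$, which \emph{absorbs} the $\my$-primary part and leaves the colon equal to $\ay_i'$; if $\qy_i\not\supseteq\my^{n_i}$ I can keep $c_i$ transverse to $\qy_i$, retaining $\my$-primary information. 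In either case $c_i\notin\ay_i$, so $x_i\neq0$.

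The engine is the hypothesis $\bigcap_i\ay_i=0$, used in two ways. For the part ``away from $\my$'' it is an ordinary intersection of ideals, handled by the nonzerodivisor choices above. The real work is the depth-zero ($\my$-primary, finite length) part, where a pure ``deep colon'' argument fails: $(\ay_i:c_i)\supseteq\ay_i$ and $\ay_i$ need not lie in a high power of $\my$ (already $\ay_i=(x-a_iy,y^{m_i})$ has colons containing order-one elements). Here I would feed in faithfulness through leading forms: in $\mathrm{gr}_{\my}(R)$ the equality $\bigcap_i\ay_i=0$ prevents any nonzero form from lying in the initial ideals of almost all $\ay_i$, and a transverse $c_i$ of intermediate order keeps the leading form of $t\,c_i$ out of $\mathrm{in}(\ay_i)$ for some $i$, whence $t\,c_i\notin\ay_i$. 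The guiding paradigm is $M=\coprod R/\my^i$: there $c_i=x^{\lfloor i/2\rfloor}$ gives $(\my^i:c_i)=\my^{\lceil i/2\rceil}$ with $\bigcap_i\my^{\lceil i/2\rceil}=0$, while $c_i\to0$.

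The main obstacle is precisely this depth-zero part together with the need to make all choices simultaneously: each $c_i$ must be deep enough for convergence, lie outside $\ay_i$ so that $x_i\neq0$, and yet leave $\bigcap_i(\ay_i:c_i)=0$. Since absorbing an $\my$-primary component enlarges the individual colon, the difficulty is to decide — driven by $\bigcap_i\ay_i=0$ — which indices should absorb their $\qy_i$ and which should keep it transverse, so that the ideal-theoretic contribution of the $\ay_i'$ and the leading-form contribution of the $\qy_i$ combine to the zero intersection. Coordinating these two regimes so that the corrected diagonal map $R\to\hat{M}$ stays injective is the crux, and it is here that completeness of $R$ (giving $\hat{M}_i=M_i$ and the description $(\ast)$) is indispensable.
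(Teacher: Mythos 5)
Your reduction is the same as the paper's: using $(\ast)$ and the fact that each $R/\ay_i$ is already complete, it suffices to produce $x=(\bar c_i)\in\prod_i R/\ay_i$ with $c_i\in\my^{n_i}$, $n_i\to\infty$, and $\mathrm{Ann}_R(x)=\bigcap_i(\ay_i:c_i)=0$. From that point on, however, your argument is not a proof: you yourself call the simultaneous coordination of the choices ``the crux'' and leave it open, and the one concrete tool you offer for the hard part is false. It is not true that $\bigcap_i\ay_i=0$ prevents a nonzero form from lying in $\mathrm{in}(\ay_i)$ for almost all $i$: take $R=k[[x,y]]$ and $\ay_i=(x-y^i)$. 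These are pairwise distinct principal primes, so $\bigcap_i\ay_i=0$ (a nonzero element of the UFD $R$ has only finitely many irreducible divisors) and $M$ is faithful, yet $\mathrm{gr}_{\my}(R)=k[x,y]$ is a domain, so $\mathrm{in}(\ay_i)=\bigl(\mathrm{in}(x-y^i)\bigr)=(x)$ for every $i\geq 2$. Initial ideals do not see infinite intersections, so faithfulness cannot be transported to $\mathrm{gr}_{\my}(R)$ in this way. Similarly, after your primary-decomposition dichotomy nothing forces $\bigcap_i\bigl(\ay_i'\cap(\qy_i:c_i)\bigr)=0$; indeed when every $\ay_i$ is $\my$-primary (the case $M=\coprod R/\my^i$) your $\ay_i'$ all equal $R$ and the entire burden falls on the part supported by the false graded claim.

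The idea you are missing is the theorem of Chevalley for complete local rings (\cite[Chap. III, \S2, Prop. 8]{2}): a decreasing sequence of ideals with intersection $0$ is cofinal with $(\my^n)_{n\geq 1}$. This solves your coordination problem in one stroke and makes primary decompositions and coordinate-by-coordinate adapted $c_i$ unnecessary. The paper puts $\by_j=\ay_1\cap\cdots\cap\ay_j$, assumes $R$ is not a field, and fixes a \emph{single} element $0\neq r\in\my$. For each $n$ the ideals $\by_j:(r^n)$ decrease in $j$ and satisfy $\bigcap_j\bigl(\by_j:(r^n)\bigr)=0$ --- this is exactly where the domain hypothesis enters, since $sr^n\in\bigcap_j\by_j=0$ forces $s=0$ --- so Chevalley yields $j_n$ with $\by_{j_n}:(r^n)\subset\my^n$, and one may arrange $j_1<j_2<\cdots$. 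Setting $x_i=\overline{r^n}$ for $j_{n-1}<i\leq j_n$ gives $x\in\hat M$ by $(\ast)$, and for every $n$ one gets $\mathrm{Ann}_R(x)\subset\by_{j_n}:(r^n)\subset\my^n$ (the exponents occurring at indices $i\leq j_n$ are at most $n$), whence $\mathrm{Ann}_R(x)=0$. Note that the completeness of $R$ is used precisely in Chevalley's theorem, not merely, as you suggest, to identify $\hat M_i=M_i$ and obtain $(\ast)$: the depths $n_i$ of your coordinates are \emph{produced} by that theorem rather than chosen in advance, and powers of one element suffice.
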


\begin{proof}
Aus $\bigcap\limits_{i=1}^\infty \ay_i=0$ folgt mit $\by_i=\ay_1 \cap \cdots \cap \ay_i$, da"s $\by_1 \supset \by_2\supset \cdots $ ist und $\bigcap\limits_{i=1}^\infty \by_i=0.$

Falls $R$ ein K"orper, ist nichts zu zeigen. Falls $R$ kein K"orper, folgt mit irgendeinem $0\neq r \in \my$ f"ur jedes $n\geq 1$, da"s $\by_1:(r^n)\supset \by_2:(r^n)\supset \cdots$ ist und
$\bigcap\limits_{i=1}^\infty \by_i:(r^n)=0$, also nach dem Theorem von Chevalley \cite[Chap. III, \S2, Prop. 8]{2} ein $j_n \geq 1$ existiert mit $\by_{j_n}:(r^n)\subset \my^n.\,$
Wir wollen gleich $j_1<j_2<j_3<\cdots$ annehmen, und mit $x_i := \overline{r^n} \in R/\ay_i$ f"ur alle $j_{n-1} <i \leq j_n$ leistet dann
$$x\;=\;(x_i)\;=\;(\bar{r},...,\bar{r}, \overline{r^2},...,\overline{r^2},\overline{r^3},...)\;\in \;\prod_{i=1}^\infty R/\ay_i$$
das Gew"unschte: F"ur jedes $n\geq 1$ sind fast alle $x_i \in \my^n \cdot R/\ay_i$, so da"s nach $(\ast)$ folgt $x \in \hat{M}.$
Und weil Ann$_R(x) \subset \ay_1:(r^n)\cap \ay_2:(r^n)\cap \cdots \cap \ay_{j_n}:(r^n)=\by_{j_n}:(r^n)\subset \my^n\,$ ist f"ur alle $n\geq 1$, folgt Ann$_R(x)=0$, d.h. $0 \in $ Ass$(\hat{M})$ wie behauptet.
\end{proof}

\begin{theorem}
Seien in $M=\coprod\limits_{i \in I} M_i$ alle $M_i$ koatomar. Dann gilt Ass$(\hat{M}) = $ Koatt$(M).$
\end{theorem}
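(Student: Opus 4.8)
The plan is to establish the two inclusions separately. The inclusion $\mbox{Ass}(\hat M)\subseteq\mbox{Koatt}(M)$ will be immediate from the purity machinery already available: first, $M=\coprod_i M_i$ is separated, since each coatomic $M_i$ is separated and an element of $\bigcap_n\my^nM$ has finite support, forcing each of its components into $\bigcap_n\my^nM_i=0$; second, by the opening paragraph of Section 3 each coatomic $M_i$ is pure in $\hat M_i$, so Lemma (3.1) shows that $M$ is pure in $\hat M$, whence $(1.10c)$ gives $\mbox{Ass}(\hat M)\subseteq\mbox{Koatt}(M)$. The substantive half is $\mbox{Koatt}(M)\subseteq\mbox{Ass}(\hat M)$, which I would reduce to Proposition (3.3).

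So let $\py\in\mbox{Koatt}(M)$. Since $M[\py]=\coprod_i M_i[\py]$, one checks that $\py\in\mbox{Koatt}(M)$ is equivalent to $\bigcap_i\mbox{Ann}_R(M_i[\py])=\py$. If $\py=\my$ then $R/\my$ embeds into some $M_i$, hence into $\hat M$, and $\my\in\mbox{Ass}(\hat M)$; so assume $\py\neq\my$ and fix $r\in\my\setminus\py$. I would first reduce to $R$ complete: $\hat M$ is the $\hat R$-adic completion of $M\otimes_R\hat R=\coprod_i(M_i\otimes_R\hat R)$, the summands stay coatomic, $\mbox{Koatt}$ is preserved, and $\mbox{Ass}_R(\hat M)=\{\Py\cap R:\Py\in\mbox{Ass}_{\hat R}(\hat M)\}$, so it suffices to produce a $\Py$ lying over $\py$. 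With $R$ complete, $R/\py$ is a complete integral domain; replacing $M$ by the faithful $R/\py$-module $M[\py]$ reduces me to the case $R$ a complete domain, $M=\coprod_iM_i$ faithful with coatomic summands, and the goal $0\in\mbox{Ass}(\hat M)$.

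Now I would pass to a countable cyclic model so as to invoke (3.3). From $\bigcap_i\mbox{Ann}(M_i)=0$ a Zorn argument applied to the ideals of the form $\bigcap_{i\in J}\mbox{Ann}(M_i)$ with $J$ countable (a family closed under countable intersection) yields a countable $J$ with intersection $0$; the same device inside the chosen $M_i$ produces countably many elements $z_1,z_2,\dots$ with $\bigcap_m\mbox{Ann}(z_m)=0$. Setting $\ay_m=\mbox{Ann}(z_m)$ and $\by_m=\ay_1\cap\cdots\cap\ay_m$, Chevalley's theorem gives, exactly as in (3.3), indices $j_1<j_2<\cdots$ with $\by_{j_n}:(r^n)\subseteq\my^n$. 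I then put $x=\sum_m r^{n(m)}z_m$ with $n(m)=n$ for $j_{n-1}<m\le j_n$; the estimate $r^{n(m)}z_m\in\my^nM$ for $m>j_n$ shows the series converges in $\hat M$, and the colon computation of (3.3) forces $\mbox{Ann}_R(x)\subseteq\bigcap_n\my^n=0$, i.e. $0\in\mbox{Ass}(\hat M)$.

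The hard part will be this last annihilator computation. In (3.3) the summands $R/\ay_i$ are the coordinates of $\prod_i\hat M_i$, so $\mbox{Ann}(x)$ is computed coordinatewise; here, however, several of the $z_m$ may lie in one and the same summand $M_i$, so the $i$-th coordinate of $x$ becomes a sum $\sum_{i_m=i}r^{n(m)}z_m$ in which cancellation could enlarge the annihilator and destroy the clean estimate. I expect the decisive step to be the reduction to one representative per summand: for coatomic $M_i$ the module $M_i[\py]$ should be finitely generated, so that $\mbox{Ann}(M_i[\py])$ is realised as the annihilator of a single element, allowing me to choose the $z_m$ in pairwise distinct summands. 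Should this fail, the alternative is to show that the induced map $\widehat{\bigoplus_m R/\ay_m}\to\hat M$ is injective and thereby transfer $0\in\mbox{Ass}$ of the cyclic model to $\hat M$. Verifying the reduction to complete $R$ and the countable reduction are the remaining routine-but-nontrivial points, but it is in this interaction between shared summands and the coordinatewise annihilator argument that I expect the difficulty to concentrate.
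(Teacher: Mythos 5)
Your overall architecture matches the paper's: purity of $M$ in $\hat M$ via (3.1) for the inclusion $\mbox{Ass}(\hat M)\subseteq\mbox{Koatt}(M)$, then reduction to $R$ complete, then to a countable family, then the Chevalley construction of Proposition (3.3). But there is a genuine gap at exactly the point you flag as the hard part, and your proposed resolution of it is false. You need elements $z_m$ realizing $\py$ as $\bigcap_m\mbox{Ann}(z_m)$ with at most one $z_m$ per summand, and you propose to get this from the claim that $M_i[\py]$ is finitely generated, so that $\mbox{Ann}(M_i[\py])$ is the annihilator of a single element. Neither half of this claim holds. A coatomic module can have infinitely generated $\py$-torsion: an infinite-dimensional vector space over $R/\my$ is coatomic (it is semisimple, and $\my M_i=0$ is finitely generated), yet $M_i[\my]=M_i$ is not finitely generated. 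And even a finitely generated module $U$ need not contain an element with $\mbox{Ann}(u)=\mbox{Ann}(U)$: over the domain $S=k[[x,y]]$ take $U=\mbox{Hom}_k\bigl(S/(x,y)^2,k\bigr)$; then $\mbox{Ann}_S(U)=(x,y)^2$, but an element $u$ with $\mbox{Ann}(u)=(x,y)^2$ would give an embedding $S/(x,y)^2\hookrightarrow U$ of modules of equal finite length, forcing $S/(x,y)^2$ to be Gorenstein, which it is not. (The trick that finitely many nonzero ideals of a domain have nonzero intersection only helps when the relevant intersection is $0$; here $\mbox{Ann}_{R/\py}(M_i[\py])$ is in general a nonzero ideal of $R/\py$.) Your fallback, injectivity of $\widehat{\bigoplus_m R/\ay_m}\to\hat M$, is also unsubstantiated: the map $\bigoplus_m R/\ay_m\to M$ itself need not be injective when several $z_m$ share a summand, and completion is not left exact.

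The paper closes precisely this gap by a device absent from your proposal, and it is the real content of its first step: since every coatomic module satisfies $\mbox{Koatt}=\mbox{Ass}$, the result \cite[Folgerung 1.7]{14} yields $\mbox{Koatt}(\prod M_i)=\mbox{Ass}(\prod M_i)$; hence $\py\in\mbox{Koatt}(M)$ is the annihilator of a \emph{single} element $y=(y_i)\in\prod M_i$, and the cyclic modules $Ry_i$ --- automatically one per summand --- make $\coprod_i Ry_i$ a faithful $R/\py$-module of exactly the form to which Proposition (3.3) applies coordinatewise. Without this (or a substitute for it) your proof does not go through. The remaining steps of your sketch do agree with the paper's Steps 2 and 3: the countable reduction is \cite[Lemma 2.1]{11}, and the reduction to complete $R$ is carried out there by producing a prime of $\hat R$ lying over $\py$ and minimal over $\mbox{Ann}_{\hat R}(\hat R\otimes_R U)$, which is the precise content behind your assertion that ``Koatt is preserved.''
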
 

\begin{proof}
Weil $M$ totalsepariert, also rein in $\hat{M}$ ist, gilt wieder Ass$(\hat{M}) \subset $ Koatt$(M)$. Die Umkehrung zeigen wir in drei Schritten:

{\it 1. Schritt}\quad Ist $R$ vollst"andig und sind in $M=\coprod\limits_{i=1}^\infty M_i$ alle $M_i$ koatomar, gilt Koatt$(M)\subset $ Ass$(\hat{M})$. Aus Koatt$(M_i)=$ Ass$(M_i)$ f"ur alle $i \in I$ folgt nach \cite[Folgerung 1.7]{14} Koatt$(\prod M_i)=$ Ass$(\prod M_i)$, so da"s zu
jedem $\py \in $ Koatt$(M)$ ein $y=(y_i) \in \prod M_i$ existiert mit $\py =$ Ann$_R(y).$
Weil $R/\py$ ein vollst"andiger Integrit"atsring und $\coprod R y_i$ als $R/\py$-Modul treu ist, gibt es nach  der Proposition  ein $x \in \prod R y_i$, so da"s Ann$_{R/\py}(x)=0$ ist und f"ur jedes $n\geq 1$ gilt: Fast alle $x_i$ liegen in $\overline{\my}^n\cdot R y_i$, also in $\my^n \cdot R y_i \subset \my^n \cdot M_i$.
Damit ist $x \in \hat{M}$ und Ann$_R(x)=\py$ wie gew"unscht.

{\it 2. Schritt} \quad Ist $R$ vollst"andig und sind in $M=\coprod\limits_{i \in I} M_i$ alle $M_i$ koatomar, gilt  Koatt$(M) \subset $ Ass$(\hat{M})$. Der abz"ahlbare Fall ist der erste Schritt. Sei also $I$ "uberabz"ahlbar und $\py \in $ Koatt$(M)$, d.h. $\py = $ Ann$_R(M[\py])\,=\bigcap\limits_{i \in I} $ Ann$_R(M_i[\py])$.
Nach \cite[Lemma 2.1]{11} gibt es eine abz"ahlbare Menge mit demselben Durchschnitt, d.h. paarweise verschiedene $i_1,i_2,i_3,...\,\in I$ mit $\py =\bigcap\limits_{m=1}^\infty $ Ann$_R(M_{i_m}[\py]).\;\;U=\coprod\limits_{m=1}^\infty M_{i_m}$ ist direkter Summand von $M$ und $\py \in $ Koatt$(U)$, also nach dem 1. Schritt $\py \in $ Ass$(\hat{U}) \subset $ Ass$(\hat{M})$ wie gew"unscht.

{\it 3. Schritt} \quad Ist schlie"slich $R$ beliebig und $M$ wie oben,
sind in $\hat{R}\otimes_R M \cong \coprod\limits_{i \in I} \hat{R} \otimes_R M_i\,$ alle Summanden  als $\hat{R}$-Moduln koatomar, so da"s nach dem 2. Schritt Koatt$_{\hat{R}}(\hat{R} \otimes_R M) \subset $ Ass$_{\hat{R}}(\widehat{\hat{R} \otimes_R M})$ gilt, wegen $\widehat{\hat{R} \otimes_R M} \cong \hat{M}$ nach \cite[p.17]{10} also Koatt$_{\hat{R}}(\hat{R} \otimes_R M)\subset $ Ass$_{\hat{R}} (\hat{M}).$
Sei nun $\py \in $ Koatt$(M):$ Mit $\py = $ Ann$_R(U)$ und $A= $ Ann$_{\hat{R}}(\hat{R} \otimes_R U)$ wird $R/\py \to \hat{R} /A$ injektiv, so da"s es ein $A \subset Q \in $ Spec$(\hat{R})$ gibt mit $Q \cap R = \py$,
dazu ein Primideal $A \subset P \subset Q$, das minimal "uber $A$ ist, und dann ist $P \in $ Koatt$_{\hat{R}}(\hat{R} \otimes_R U) \subset $ Koatt$_{\hat{R}}(\hat{R} \otimes_R M)$ sowie $P \cap R = \py.$
Nach dem ersten Teil folgt $P \in $ Ass$_{\hat{R}}(\hat{M})$, also $\py \in $ Ass$(\hat{M})$ wie gew"unscht.
\end{proof}

\begin{beispiel}
Ist $M=\coprod\limits_{i=1}^\infty R/\my^i$, so gilt Ass$(\hat{M}) = \{\my\}\cup $ Ass$(R).$
\end{beispiel}

\begin{proof}
Nach \cite[p.1984]{14} ist Koatt$(\coprod\limits_{i=1}^\infty R/\my^i)\,=\{\my\}\cup $ Ass$(R).$
\end{proof}

\begin{beispiel}
Ist $M=\coprod\limits_{i=1}^\infty (R/\my^i)^0$, so gilt Ass$(\hat{M}) = $ Spec$(R).$
\end{beispiel}

\begin{proof}
F"ur jedes Primideal $\py$ gilt, da"s $M[\py] \cong \coprod\limits_{i=1}^\infty (R/\my^i+\py)^0$ ist, also Ann$_R(M[\py])=\bigcap\limits_{i=1}^\infty (\my^i+\py)=\py$, d.h. $\py \in $ Koatt$(M).$
\end{proof}

{\it Frage 3}\quad F"ur welche separierten $R$-Moduln $M$ gilt Koatt$(M) \subset $ Ass$(\hat{M})$?

\vspace{0.5cm}

\begin{remark}
{\rm F"ur einen beliebigen $R$-Modul $M$ betrachten wir die folgenden drei Bedingungen: (A) $\;$ Ass$(M)= $ Koatt$(M),\;$ (S) $\;\supp (M) = V($Ann$_R(M))\;$ und (B) $\;$ Ass$(M)$ besitzt eine endliche finale Teilmenge.
Weil (S) "aquivalent damit ist, da"s jeder minimale Primdivisor von Ann$_R(M)$ zu Ass$(M)$ geh"ort, gilt stets (A $\Rightarrow$ S) und (S $\Rightarrow$ B).
Erf"ullen aber alle vollst"andigen $R$-Moduln die Bedingung (B), so auch (A):
Aus $M$ vollst"andig und $\py \in $ Koatt$(M)$ folgt $M_1=M[\py]$ vollst"andig und $\py = $ Ann$_R(M_1)$, also nach (1.1) $\py = \bigcap $ Ass$(M_1)$ und wegen (B) $ \py \in $ Ass$(M_1) \subset $ Ass$(M).$

Simon stellte in \cite[p.244]{9} die bis heute unbeantwortete Frage, ob alle vollst"andigen $R$-Moduln die Bedingung (S) erf"ullen. Nach dem Vorhergehenden ist das "aquivalent mit der Frage, ob {\it alle} separierten $R$-Moduln $M$ die Bedingung Koatt$(M) \subset $ Ass$(\hat{M})$ erf"ullen.}
\end{remark}

\begin{remark}
{\rm Auch unser in \cite[p.197]{11} gestelltes Problem ist bis heute ungel"ost: Gilt f"ur jeden $R$-Modul $A$, da"s Koass$(A) $ eine endliche finale Teilmenge besitzt?

Dabei hei"st ein Primideal $\py$ {\it koassoziiert} zu $A$, wenn es einen artinschen Faktormodul $A/U$ gibt mit $\py = $ Ann$_R(A/U)$, und wegen Koass$(A)= $ Ass$(A^0)$ ist unsere Frage "aquivalent damit, ob alle dualen $R$-Moduln $M=A^0$ die Bedingung (B) in (3.7) erf"ullen.}
\end{remark}

\vspace{1cm}

\end{document}